\let\mathcal\mathscr
\definecolor{darkgreen}{rgb}{.0,.41961,.18431}
\theoremstyle{plain}
\newtheorem*{theorem*}{Main Theorem}
\newtheorem{theorem}{Theorem}
\newtheorem{proposition}[theorem]{Proposition}
\newtheorem{lemma}[theorem]{Lemma}
\newtheorem{corollary}[theorem]{Corollary}
\theoremstyle{definition}
\newtheorem{definition}[theorem]{Definition}
\newtheorem{remark}[theorem]{Remark}
\newtheorem{notation}[theorem]{Notation}
\newtheorem{example}[theorem]{Example}
\newtheorem{properties}[theorem]{Properties}
\renewcommand{\theenumi}{\alph{enumi}}
\newcounter{toto}
\def\thetoto{\arabic{toto}}
\let\oldmarginpar\marginpar
\def\marginpar#1{\refstepcounter{toto}\textsuperscript{\textup{[\thetoto]}}\oldmarginpar{\footnotesize\textsuperscript{[\thetoto]}\,#1}}
\def\mainmatter{\renewcommand{\baselinestretch}{1.1}\normalfont}
\def\backmatter{\renewcommand{\baselinestretch}{1}\normalfont}
\def\l@section{\@tocline{1}{0pt}{0pc}{}{}}
\def\l@subsection{\@tocline{2}{0pt}{1.5pc}{}{}}
\def\l@subsubsection{\@tocline{3}{0pt}{2pc}{}{}}
\newcommand{\C}{\mathbb{C}}\let\CC\C
\let\NN\N
\newcommand{\R}{\mathbb{R}}\let\RR\R
\newcommand{\Z}{\mathbb{Z}}
\newcommand{\bD}{\boldsymbol{D}}
\newcommand{\shhom}{\mathcal{H}\!\mathit{om}}\let\ho\shhom
\DeclareMathOperator{\conj}{c}
\DeclareMathOperator{\Conj}{C}
\let\Rhom\rh
\DeclareMathOperator{\RH}{RH}
\newcommand{\rb}{\mathrm{b}}
\newcommand{\rc}{\mathrm{c}}
\newcommand{\rd}{\mathrm{d}}
\newcommand{\coh}{\mathrm{coh}}
\newcommand{\hol}{\mathrm{hol}}
\newcommand{\rhol}{\mathrm{rhol}}
\newcommand{\Mod}{\mathrm{Mod}}
\newcommand{\op}{\mathrm{op}}
\newcommand{\ord}{\mathrm{ord}}
\newcommand{\cc}{{\C\textup{-c}}}
\newcommand{\XC}{X\times\CC}
\newcommand{\XS}{X\times S}
\newcommand{\XbS}{\ov X\times S}
\newcommand{\XpS}{X'\times S}
\newcommand{\YS}{Y\times S}
\newcommand{\DX}{\shd_X}
\newcommand{\DXb}{\shd_{\ov X}}
\newcommand{\DXC}{\shd_{\XC/\CC}}
\newcommand{\DXS}{\shd_{\XS/S}}
\newcommand{\DXbS}{\shd_{\XbS/S}}
\newcommand{\DXSp}{\shd_{\XS'/S'}}
\newcommand{\DYS}{\shd_{Y\times S/S}}
\DeclareMathOperator{\Char}{Char}
\DeclareMathOperator{\rD}{\mathsf{D}}
\DeclareMathOperator{\DR}{DR}
\DeclareMathOperator{\Cb}{\mathfrak{C}}
\DeclareMathOperator{\Db}{\mathfrak{Db}}
\DeclareMathOperator{\pDR}{{}^\mathrm{p}DR}
\DeclareMathOperator{\id}{Id}
\DeclareMathOperator{\reel}{Re}
\DeclareMathOperator{\pSol}{{}^\mathrm{p}Sol}
\let\bar\overline
\let\ov\overline
\let\epsilon\varepsilon
\let\setminus\smallsetminus
\let\leq\leqslant
\let\geq\geqslant
\def\loccit{loc.\kern3pt cit.{}\xspace}
\def\cf{cf.\kern.3em}
\def\Cf{Cf.\kern.3em}
\def\eg{e.g.\kern.3em}
\def\ie{i.e.,\ }
\def\resp{\text{resp.}\kern.3em}
\let\moins\smallsetminus
\newcommand{\Df}{{}_{\scriptscriptstyle\mathrm{D}}f}
\newcommand{\Di}{{}_{\scriptscriptstyle\mathrm{D}}i}
\newcommand{\Drho}{{}_{\scriptscriptstyle\mathrm{D}}\rho}
\newcommand{\Dpi}{{}_{\scriptscriptstyle\mathrm{D}}\pi}
\newcommand{\cbbullet}{{\raisebox{1pt}{$\sbullet$}}}
\newcommand{\sbullet}{{\scriptscriptstyle\bullet}}
\newcommand{\pOS}{p^{-1}\sho_S}
\def\shc{\mathcal{C}}
\def\shd{\mathcal{D}}
\let\cF F
\let\cG G
\def\shh{\mathcal{H}}
\def\shi{\mathcal{I}}
\def\shj{\mathcal{J}}
\def\shm{\mathcal{M}}
\def\shn{\mathcal{N}}
\def\sho{\mathcal{O}}
\def\shx{\mathcal{X}}
\newcommand{\RedefinitSymbole}[1]{%
\expandafter\let\csname old\string#1\endcsname=#1
\let#1=\relax
\newcommand{#1}{\csname old\string#1\endcsname\,}%
}
\def\to{\mathchoice{\longrightarrow}{\rightarrow}{\rightarrow}{\rightarrow}}
\def\mto{\mathchoice{\longmapsto}{\mapsto}{\mapsto}{\mapsto}}
\def\hto{\mathrel{\lhook\joinrel\to}}
\def\To#1{\mathchoice{\xrightarrow{\textstyle\kern4pt#1\kern3pt}}{\stackrel{#1}{\longrightarrow}}{}{}}
\def\isom{\stackrel{\sim}{\longrightarrow}}
\let\oldbigoplus\bigoplus
\renewcommand{\bigoplus}{\mathop{\textstyle\oldbigoplus}\displaylimits}
\let\oldbigwedge\bigwedge
\renewcommand{\bigwedge}{\mathop{\textstyle\oldbigwedge}\displaylimits}
\let\oldbigcap\bigcap
\renewcommand{\bigcap}{\mathop{\textstyle\oldbigcap}\displaylimits}
\let\oldbigcup\bigcup
\renewcommand{\bigcup}{\mathop{\textstyle\oldbigcup}\displaylimits}
\newcommand{\bmm}{\boldsymbol{m}}
\newcommand{\bmr}{\boldsymbol{r}}
\DeclareMathOperator{\Supp}{Supp}
\begin{document}

\author[T. Monteiro Fernandes]{Teresa Monteiro Fernandes}
\address[T. Monteiro Fernandes]{Centro de Matem\'atica, Aplica\c{c}\~{o}es Funda\-men\-tais e Investiga\c c\~ao Operacional and Departamento de Matem\' atica da Faculdade de Ci\^en\-cias da Universidade de Lisboa, Bloco C6, Piso 2, Campo Grande, 1749-016, Lisboa
Portugal}
\email{mtfernandes@fc.ul.pt}

\author[C.~Sabbah]{Claude Sabbah}
\address[C.~Sabbah]{CMLS, CNRS, École polytechnique, Institut Polytechnique de Paris, 91128 Palaiseau cedex, France}
\email{Claude.Sabbah@polytechnique.edu}
\urladdr{http://www.math.polytechnique.fr/perso/sabbah}

\title{The relative hermitian duality functor}

\date{\today}

\keywords{Relative $\mathcal D$-module, regular holonomic $\mathcal D$-module, Hermitian duality, conjugation, }

\subjclass[2010]{14F10, 32C38, 35A27, 58J15}

\begin{abstract}
We extend to the category of relative regular holonomic modules on a manifold $X$, parametrized by a curve $S$, the Hermitian duality functor of Kashiwara. We prove that this functor is an equivalence with the similar category on the conjugate manifold $\overline X$, parametrized by the same curve.
\end{abstract}

\maketitle
\tableofcontents
\mainmatter

\vspace*{-2\baselineskip}\vskip0pt%
\section{Introduction and statement of the main result}

Let $X$ and $S$ be complex manifolds, $S$ having dimension at most one. We~denote by $p_X:\XS\to S$ (or simply by $p$) the projection. The ring of holomorphic differential operators on $\XS$ relative to the projection $p$ is denoted by $\DXS$. It is a subsheaf of the sheaf of holomorphic differential operators $\shd_{\XS}$.

We have introduced in \cite{MFCS2,FMFS19} the notion of \emph{regular holonomic $\DXS$\nobreakdash-mod\-ule} and the category $\rD^\rb_\rhol(\DXS)$ of bounded complexes of $\DXS$\nobreakdash-mod\-ules having regular holonomic cohomology: a coherent $\DXS$-mod\-ule~$\shm$ is said to be regular holonomic if it is holonomic, that is, its characteristic variety is contained in a product $\Lambda\times S$, $\Lambda$ being analytic lagrangian $\C^*$-homogeneous in $T^*X$, and the restriction (in the sense of $\sho$-modules) to each fiber of $p$, denoted by $Li^*_{s}\shm$ for each $s\in S$, belongs to $\rD^\rb_{\rhol}(\shd_X)$. We have shown that the category $\rD^\rb_{\rhol}(\shd_{\bullet\times S/S})$ is stable by the following functors.
\begin{enumerate}
\item
$\Df_*$ for a projective morphism $f:Y\to X$, or for a proper morphism and restricting to $f$-good objects (\cite[Cor.\,2.4]{MFCS2}, where the condition $f$-good or projective was mistakenly forgotten, \cf\loccit, Th.\,1.17).
\item
$\Df^*$ for any holomorphic map $f:Y\to X$ (\cf\cite[Th.\,2]{FMFS19}).
\item
Duality $\bD$ (\cite[Prop.\,2.3]{MFCS2}).
\item
$R\Gamma_{[Z\times S]}$ for any $Z$ closed analytic subset of $X$ (this follows from \cite[Prop.\,2.6(b)]{FMFS19} and the relative Riemann-Hilbert correspondence of \loccit).
\end{enumerate}

The pair of quasi-inverse contravariant functors $(\pSol,\RH^S)$, providing the relative Riemann-Hilbert correspondence of \loccit\ ($\pSol$ denotes the derived solution functor shifted by the dimension of $X$), makes the above functors compatible with the corresponding functors on the category $\rD^\rb_\cc(\pOS)$ of $S$-$\CC$-constructible complexes. This is the triangulated category of complexes $F$ of sheaves of $p^{-1}\sho_S$-modules such that, for a suitable $\C$-analytical stratification $(X_{\alpha})_{\alpha\in A}$ of $X$ and any $\alpha\in A$, each cohomology sheaf of $F|_{X_{\alpha}\times S}$~is, locally on $X_{\alpha}\times S$, isomorphic to $p^{-1} G$ for some coherent $\sho_S$-module $G$. It~was proved in \cite{MFCS1} and \cite{MFCS2} that for all $s\in S$ the restriction functor $Li^*_s$ is conservative in both categories $\rD^\rb_{\cc}(p^{-1}\sho_S)$ and $\rD^\rb_{\hol}(\DXS)$.\enlargethispage{-\baselineskip}%

In this article, extending some definitions and results of \cite{Kashiwara86} (\cf Notation~\ref{notation} below for the notion of conjugate manifold), we define the contravariant \emph{relative Hermitian duality functor} $\Conj_{X,\ov X}^S$ and its covariant companion, the \emph{relative conjugation functor} $\conj_{X,\ov X}^S:=\Conj_{X,\ov X}^S\circ \bD$, both from $\DXS$-modules to $\DXbS$-modules. For that purpose, we make use of the sheaf of relative distributions $\Db_{\XS/S}$, consisting of those distributions on $\XS$ which are holomorphic with respect to $S$ (\ie killed by the $\ov\partial_S$ operator). A reminder on this sheaf is given in the appendix, for the sake of being complete.

\begin{notation}\label{notation}
For a complex manifold $\shx$, endowed with its sheaf $\sho_\shx$ of holomorphic functions, we denote by $\ov\shx$ the same underlying $C^\infty$-manifold, endowed with the sheaf $\ov{\sho_\shx}$ of anti-holomorphic functions, that we denote by~$\sho_{\ov\shx}$. We~regard~$\sho_{\ov\shx}$ as an $\sho_\shx$-module by setting, for any holomorphic function~$f$ on~$\shx$, $f\cdot1:=\ov f$. Given an $\sho_\shx$-module $\shm$, the \emph{naive conjugate} module~$\ov\shm$ is defined as $\ov\shm=\sho_{\ov\shx}\otimes_{\sho_\shx}\shm$. In other words, $\ov\shm$ is equal to $\shm$ as a sheaf of $\CC$-vector spaces, and the action of $\sho_{\ov\shx}$ is that induced by the action of $\sho_\shx$. This naive conjugation functor can be regarded from $\Mod(\sho_\shx)$ to $\Mod(\sho_{\ov\shx})$ or vice-versa, and the composition of both is the identity. In~particular, the conjugate $\ov Z$ of a closed analytic subspace $Z$ of $X$ with ideal sheaf $\shi_Z$ is the closed subset $|Z|$ endowed with the sheaf $\shi_{\ov Z}:=\ov{\shi_Z}\subset\ov{\sho_\shx}$, so that~$Z$ and~$\ov Z$ have the same support.

For a complex manifold $X$, the categories of sheaves of $\CC$-vector spaces on $X$, \resp of $\pOS$-modules on $\XS$, coincide with that on $\ov X$, \resp on $\ov \XS$, since they only depend on the underlying topological space $|X|$. By~the above remark, the categories of $\RR$-constructible or $\CC$-constructible such objects also coincide. As a consequence, we have a canonical identification $\rD^\rb_{\cc}(p_X^{-1}\sho_S)=\rD^\rb_{\cc}(p_{\ov X}^{-1}\sho_S)$.
\end{notation}

The main result of this work is the following.

\begin{theorem}\label{th:main}\mbox{}
\begin{enumerate}
\item\label{th:main1}
The relative Hermitian duality functor
\[
\Conj_{X,\ov X}^S(\cbbullet):=\Rhom_{\DXS}(\cbbullet,\Db_{\XS/S})
\]
induces an equivalence
\begin{equation}\label{eq:ConjXS}
\Conj_{X,\ov X}^S:\rD^\rb_{\rhol}(\DXS)\isom\rD^\rb_{\rhol}(\DXbS)^\op
\end{equation}
such that
\begin{equation}\label{eq:ConjXSinvol}
\Conj_{\ov X,X}^S\circ\Conj_{X,\ov X}^S\simeq\id.
\end{equation}
Moreover, the relative conjugation functor $\conj_{X,\ov X}^S:=\Conj_{X,\ov X}^S\circ \bD$ induces an equivalence
\[
\conj_{X,\ov X}^S:\rD^\rb_{\rhol}(\DXS)\isom\rD^\rb_{\rhol}(\DXbS),
\]
and there is an isomorphism of functors
\begin{equation}\label{eq:ConjXSconjSol}
\pSol_{\ov X}\circ\conj_{X,\ov X}^S\simeq\pSol_X:\rD^\rb_{\rhol}(\DXS)\to\rD^\rb_{\cc}(\pOS).
\end{equation}
\item\label{th:main2}
If $\shm\!\in\!\Mod_\rhol(\DXS)$ is strict, then so is $\shh^0\Conj_{X,\ov X}^S(\shm)$ and $\shh^i\Conj_{X,\ov X}^S(\shm)=0$ for $i\neq0$.
\item\label{th:main3}
If $\shm\!\in\!\Mod_\rhol(\DXS)$ is torsion, then so is $\shh^1\Conj_{X,\ov X}^S(\shm)$ and $\shh^i\Conj_{X,\ov X}^S(\shm)=0$ for $i\neq1$.
\end{enumerate}
\end{theorem}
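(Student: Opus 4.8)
My plan is to deduce the three assertions from three external inputs — Kashiwara's absolute Hermitian duality theorem \cite{Kashiwara86}, the relative Riemann--Hilbert correspondence $(\pSol,\RH^S)$ of \cite{FMFS19}, and the conservativity of the fiber restrictions $Li^*_s$ — organised around the principle that $\Conj_{X,\ov X}^S$ superposes two dualities of different natures. In the $X$-direction it is a holonomic duality, concentrated in degree $0$ on a single module by Kashiwara; in the $S$-direction it is an $\sho_S$-duality, which is exact on strict ($\sho_S$-flat) modules but shifts $\sho_S$-torsion by $1$ because $\dim S\leq1$. The technical heart, which I expect to be the main obstacle, is a base-change formula
\[
Li^*_s\circ\Conj_{X,\ov X}^S\simeq\Conj_{X,\ov X}\circ Li^*_s\qquad(s\in S),
\]
$\Conj_{X,\ov X}$ being the absolute Hermitian duality functor. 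I would prove it by resolving the argument locally by free $\DXS$-modules and invoking the two properties of the relative distribution sheaf recalled in the appendix: flatness of $\Db_{\XS/S}$ over $\pOS$, and the fiber restriction $Li^*_s\Db_{\XS/S}\simeq\Db_X$. Flatness lets $Li^*_s$ pass through the $\Ho$ of a free module, and the restriction identity then produces the absolute functor on the fiber; the analytic control of $\Db_{\XS/S}$ needed here is the delicate point.

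Granting base change, assertion~\eqref{th:main2} is immediate: for strict $\shm$ one has $Li^*_s\shm=i_s^*\shm\in\rD^\rb_\rhol(\shd_X)$ in degree $0$, so $Li^*_s\Conj_{X,\ov X}^S(\shm)\simeq\Conj_{X,\ov X}(i_s^*\shm)$ is concentrated in degree $0$ for every $s$ by Kashiwara; inspecting the top and bottom nonzero cohomology and using conservativity of $Li^*_s$ forces $\Conj_{X,\ov X}^S(\shm)$ to sit in degree $0$. Its strictness amounts to the vanishing of all $\sho_S$-torsion, which is detected by the term $\shh^{-1}Li^*_s=\mathrm{Tor}_1^{\sho_S}$; this term vanishes precisely because the restriction is in degree $0$. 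For assertion~\eqref{th:main3} I would note that a torsion module is built by dévissage from $\shd_X$-modules supported on fibers of $p$, and apply $\Conj_{X,\ov X}^S$ to the $S$-direction resolution $0\to\sho_S\xrightarrow{t^k}\sho_S\to\sho_S/t^k\to0$: the $\sho_S$-duality then contributes a single $\mathrm{Ext}^1$, i.e.\ a shift by $1$, which combined with the degree-$0$ concentration of the absolute dual yields concentration in degree $1$, the result being manifestly torsion.

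Assertion~\eqref{th:main1} is assembled from these. Stability of $\rD^\rb_\rhol$ under $\Conj_{X,\ov X}^S$ follows from \eqref{th:main2} and \eqref{th:main3} after reducing complexes to modules by truncation and splitting each module into its torsion submodule and strict quotient. For the involution \eqref{eq:ConjXSinvol} I would construct the biduality morphism $\id\to\Conj_{\ov X,X}^S\circ\Conj_{X,\ov X}^S$ and check that it becomes an isomorphism after $Li^*_s$: applying base change twice reduces it to Kashiwara's absolute biduality on the fibers, and conservativity concludes — the shift by $1$ produced on torsion by \eqref{th:main3} being undone by the second application. Running the same argument with $\ov X$ in place of $X$ gives the opposite composite, so $\Conj_{X,\ov X}^S$ is an equivalence, and $\conj_{X,\ov X}^S=\Conj_{X,\ov X}^S\circ\bD$ is a covariant equivalence since $\bD$ is one.

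Finally, for \eqref{eq:ConjXSconjSol} I would use a relative Dolbeault-type resolution of $\Db_{\XS/S}$ to identify $\pSol_{\ov X}\circ\conj_{X,\ov X}^S$ with the (coefficientwise) conjugate of $\pSol_X$, exactly as in the absolute case of \cite{Kashiwara86}; under the identification $\rD^\rb_\cc(p_X^{-1}\sho_S)=\rD^\rb_\cc(p_{\ov X}^{-1}\sho_S)$ of Notation~\ref{notation} this conjugation is the identity. As before it suffices to check the resulting morphism of functors fiberwise, where it reduces to the absolute statement.
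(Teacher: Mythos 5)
Your proposal correctly organizes the formal consequences (base change plus conservativity gives the involution \eqref{eq:ConjXSinvol}, the degree-$0$ concentration for strict modules, and the degree-$1$ shift for torsion modules), and this scaffolding does match the paper's reduction. But the argument as written is circular, and the circularity sits exactly on the hardest point. The conservativity of the family $(Li^*_s)_{s\in S}$ recalled from \cite{MFCS1,MFCS2} holds on $\rD^\rb_{\hol}(\DXbS)$, \emph{not} on arbitrary bounded complexes of $\DXbS$-modules: a $\pOS$-module can have all fibers $Li^*_s$ equal to zero without vanishing (take a module of functions meromorphic in $s$), so the implication ``$Li^*_s\Conj_{X,\ov X}^S(\shm)$ is concentrated in degree $0$ for all $s$, hence so is $\Conj_{X,\ov X}^S(\shm)$'' is only legitimate once one already knows that the cohomology of $\Conj_{X,\ov X}^S(\shm)$ is $\DXbS$-holonomic. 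That statement --- that $\Conj_{X,\ov X}^S$ lands in $\rD^\rb_{\rhol}(\DXbS)$, item \eqref{enum:i} in the paper --- is a finiteness property over $\DXbS$ which is not detectable fiberwise and which neither Kashiwara's absolute theorem nor the base-change isomorphism provides; you then propose to deduce precisely this stability from parts \eqref{th:main2} and \eqref{th:main3}, closing the loop. (A secondary misjudgment: you single out base change as ``the technical heart'', but in the paper it is a few lines, via $Li^*_{s_o}\Db_{\XS/S}\simeq\Db_X$ (Corollary~\ref{memento}) and \cite[(A.10)]{Kashiwara03}; likewise your fiberwise check of \eqref{eq:ConjXSconjSol} needs holonomicity of the target before the results of \cite{MFCS1} can be invoked.)

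What is missing is therefore the actual core of the paper's proof: the statement $P_X(\shm)$, \ie $\Conj_{X,\ov X}^S(\shm)\in\rD^\rb_{\rhol}(\DXbS)$, established by the induction scheme of \cite[Th.\,3]{FMFS19}. Step 1 is the torsion case (Lemma~\ref{lem:torsion}): after reducing to $s\shm=0$ and writing $\shm\simeq q_X^{*}\shm'\otimes_{p^{-1}_X\sho_S}p^{-1}_X(\sho_S/s\sho_S)$, the computation reduces to Kashiwara's absolute theorem with a shift by $1$ --- this is the one place where your dévissage sketch is essentially the paper's argument. Step 2 reduces to strict modules, localized along a hypersurface, with pure-dimensional support; Step 3 reduces to modules of D-type along a normal crossing divisor via resolution of singularities, using the stability of $\rD^\rb_{\rhol}$ under projective pushforward \cite[Cor.\,2.4]{MFCS2}; Step 4 treats the D-type case by an explicit computation requiring a ramification $\rho:S'\to S$ of the parameter, the compatibility $\rho^*\Db_{\XS/S}\simeq\Db_{\XS'/S'}$ (Corollary~\ref{Cram}), presentations by the operators $x_i\partial_i-\alpha_i(s)$, the twist by $|x|^{2\alpha(s)}$, and the relative Dolbeault--Grothendieck lemma. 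None of this appears in your proposal, and without it the conservativity arguments you rely on cannot be launched.
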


\begin{remark}
According to \cite{FMF18}, $\pSol$ is $t$-exact with respect to the perverse $t$-structure on $\rD^\rb_{\cc}(p^{-1}\sho_S)$ and the dual $t$-structure of the natural one in $\rD^\rb_{\rhol}(\DXS)$.
This entails that the functor $\conj_{X,\ov X}^S$ is $t$-exact with respect to the natural $t$-structures in both origin and target and the functor $\Conj_{X,\ov X}^S$ is
$t$-exact with respect to the natural one in $\rD^\rb_{\rhol}(\DXS)$ and the dual $t$-structure on $\rD^\rb_{\rhol}(\shd_{\ov{X}\times S/S})$, as seen on \eqref{th:main2} and \eqref{th:main3}.
\end{remark}

Recall (\cf\cite{MFCS1}) that a $\DXS$-module is said to be \emph{strict} if it has non nonzero $\pOS$-torsion. It is said to be \emph{torsion} if it is a torsion $\pOS$-module.

We can express the main theorem by emphasizing the functor $\RH^S$ instead of $\pSol$. For example, by the relative Riemann-Hilbert correspondence of~\hbox{\cite[Th.\,1]{FMFS19}} and \eqref{eq:ConjXSinvol}, \eqref{eq:ConjXSconjSol} can be expressed as an isomorphism of functors
\begin{equation}\label{eq:ConjXSconjRH}
\RH^S_{\ov X}\simeq \conj_{X,\ov X}^S\circ\RH^S_X:\rD^\rb_{\cc}(\pOS)\to\rD^\rb_{\rhol}(\DXbS).
\end{equation}

\subsubsection*{Ackowledgements}
We warmly thank Luisa Fiorot for useful discussions and pointing out a mistake in older Proposition \ref{P:RHD1}.

\section{Review on Kashiwara's conjugation functor}
The case where $S$ is a point (that we call the ``absolute case'') was solved by Kashiwara in \cite{Kashiwara86}, where $\Conj_{X,\ov X}^S$ is simply denoted by~$\Conj_X$. We recall here the main results in \cite{Kashiwara86} (cf. also \cite{BK86} for further study of this functor).

\pagebreak[2]
\begin{properties}\label{properties}\mbox{}
\begin{enumerate}
\item\label{enum:1}
Denoting by $\Db_X$ the left $(\DX,\DXb)$-bimodule of distributions on~$X$, the \emph{Hermitian duality functor} $\Conj_X$ is defined by
\[
\Conj_X(\shm):=\ho_{\DX}(\shm,\Db_X),
\]
endowed with the $\DX$\nobreakdash-module structure induced by that of $\Db_X$. It is an equivalence $\Mod_{\rhol}(X)\simeq\Mod_{\rhol}(\ov X)^\op$. We recall that regularity is in fact not needed, and that $\Conj_X$ extends as an equivalence $\Mod_{\hol}(X)\simeq\Mod_{\hol}(\ov X)^\op$, \cf \cite[\S II.3]{Sabbah00}, \cite[\S4.4]{Mochizuki10b}, \cite[\S12.6]{Sabbah13}, but we will not develop this direction in this article. Let us also recall that the nondegenerate associated Hermitian pairing $\shm\otimes_\CC\nobreak\Conj_X(\shm)\to\Db_X$ plays an instrumental role in the theory of mixed twistor $\shd$\nobreakdash-modules (\cf\cite{Sabbah05,Mochizuki08,Mochizuki11}).
\item\label{enum:1b}
In order to keep the same underlying manifold $X$ in the target category, we compose $\Conj_X$ with naive conjugation and obtain the equivalence $\Conj_X(\overline{\,\cbbullet\,}):\Mod_{\rhol}(X)\simeq\Mod_{\rhol}(X)^\op$.
\item\label{enum:2}
Denoting by $\bD$ the duality functor for holonomic $\DX$-modules, the \emph{conjugation functor} $\conj_X(\shm)$ is defined as $\Conj_X(\ov{\bD\shm})$. For $\shm\in\Mod_{\rhol}(X)$ (and also in $\Mod_{\hol}(X)$), $\conj_X(\shm)=\ho_{\DXb}(\ov{\bD\shm},\Db_X)$. It is thus a covariant functor, inducing an equivalence $\Mod_{\rhol}(X)\simeq\Mod_{\rhol}(X)$ (and similarly for holonomic modules). This functor satisfies
\[
\pSol(\conj_X(\shm))\simeq\ov{\pSol(\shm)}\quad\text{and}\quad\pDR(\conj_X(\shm))\simeq\ov{\pDR(\shm)},
\]
where, on the right-hand sides, we consider the naive conjugation of complexes of $\CC$-vector spaces.
\item\label{enum:4}
For $\shm$ regular holonomic (holonomic is enough, according to \eqref{enum:1}) the local cyclicity of $\Conj_X\shm$ (because it is holonomic) is equivalent to the property that $\shm$ can be locally embedded as a $\DX$-submodule of~$\Db_X$.

\item\label{enum:3}
If $u\in\Gamma(X,\Db_X)$ is a regular holonomic distribution on $X$, \ie is such that the submodule $\DX u\subset\Db_X$ is regular holonomic, then the $\DXb$-module $\DXb u\simeq\Conj_X(\DX u)\subset\Db_X$ is also regular holonomic (\cf\cite[\S3]{Kashiwara86}, \cite[Prop.\,7.4.3]{Bjork93}). In other words, the conjugate distribution~$\ov u$ also generates a regular holonomic $\DX$-submodule of $\Db_X$. (The same property holds for holonomic distributions.)
\end{enumerate}
\end{properties}

\section{Applications: relative regular holonomic distributions}
In this section we extend to the relative case some applications given in~\cite{Kashiwara86} and \cite{Bjork93}. From \cite{Bjork93} we adapt two functorial applications but a number of other applications in \loccit should be generalizable to the relative setting. We~will say for short that a morphism of $\DXS$-modules is \emph{an isomorphism up to $S$-torsion} if its kernel and cokernel are $S$-torsion $\DXS$-modules. 

Concerning the analogue of \eqref{enum:4}, the following is obtained as in \cite[Prop.\,5]{Kashiwara86}.

\begin{proposition}\label{P:RHD1}
Let $\shm$ be a strict regular holonomic $\DXS$-module and let~$\phi$ be a local section of $\Conj_{X,\ov X}^S(\shm)$, hence a $\DXS$-linear morphism from~$\shm$ to $\Db_{\XS/S}$. The following properties are equivalent:
\begin{enumerate}
\item
$\phi$ is injective,
\item
the inclusion $\shd_{\ov{X}\times S/S}\phi\subset\Conj_{X,\ov X}^S(\shm)$ is an equality up to $S$-torsion.
\end{enumerate}
\end{proposition}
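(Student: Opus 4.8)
The plan is to follow the strategy of \cite[Prop.\,5]{Kashiwara86}, reducing the equivalence to a single cyclicity statement and then importing the absolute result through the fibers. Throughout, set $\shn:=\Conj_{X,\ov X}^S(\shm)$, which by Theorem~\ref{th:main}\eqref{th:main2} is a strict regular holonomic $\DXbS$-module placed in degree $0$, so that $\phi$ is genuinely a $\DXS$-linear map $\shm\to\Db_{\XS/S}$. Write $\shn':=\shd_{\ov X\times S/S}\cdot\phi\subseteq\shn$ for the submodule generated by $\phi$ and $\mathcal Q:=\shn/\shn'$; being a submodule of the strict module $\shn$, $\shn'$ is again strict, and with this notation condition~(b) is exactly the vanishing $\mathcal Q=0$.

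First I would record the formal part. Applying $\Conj_{\ov X,X}^S$ to $0\to\shn'\to\shn\to\mathcal Q\to0$ and using that $\shn,\shn'$ are strict (so their images lie in degree $0$ by \eqref{th:main2}) together with the involutivity \eqref{eq:ConjXSinvol}, which gives $\shh^0\Conj_{\ov X,X}^S(\shn)\simeq\shm$, the long exact sequence collapses to
\[
0\to\shh^0\Conj_{\ov X,X}^S(\mathcal Q)\to\shm\xrightarrow{\ \lambda\ }\shh^0\Conj_{\ov X,X}^S(\shn')\to\shh^1\Conj_{\ov X,X}^S(\mathcal Q)\to0 .
\]
The map $\lambda$ is the dual of the inclusion $\shn'\hookrightarrow\shn$; concretely $\lambda(m)$ is the functional $\nu\mapsto\nu(m)$ on $\shn'$, so that $\lambda(m)$ evaluated at the generator $\phi$ equals $\phi(m)$. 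Hence, if $\phi$ is injective then $\lambda$ is injective, and the sequence forces $\shh^0\Conj_{\ov X,X}^S(\mathcal Q)=0$; by \eqref{th:main2}--\eqref{th:main3} this means precisely that $\mathcal Q$ has no strict part, \ie $\mathcal Q$ is a torsion $\pOS$-module.

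The core of the proof, and the only place where the relative theory genuinely differs from Kashiwara's, is then to show that this torsion module $\mathcal Q$ vanishes. Here I would use the conservativity of $Li^*_s$ on $\rD^\rb_\rhol(\DXbS)$ together with the compatibility of $\Conj_{X,\ov X}^S$ with the fibrewise functor $\Conj_X$ of Properties~\ref{properties}. Since $\shm$ is strict, $Li^*_s=i^*_s$ on $\shm$, $\shn$ and $\shn'$, and restricting the exact sequence to a fiber identifies $i^*_s\shn=\Conj_X(Li^*_s\shm)$ and reduces the surjectivity of $i^*_s\shn'\to i^*_s\shn$ to the generation statement of Properties~\ref{properties}\eqref{enum:3} applied to the image of $i^*_s\phi$. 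The obstruction is then measured by $\Conj_X(\ker(i^*_s\phi))$, and the content is that this vanishes for every $s$; the torsion-detecting part \eqref{th:main3} simultaneously controls the remaining $\shh^{-1}(Li^*_s\mathcal Q)$-term, after which conservativity yields $\mathcal Q=0$, \ie (b). The main obstacle is exactly this step: restriction to a special fiber can destroy injectivity of $\phi$, because the cokernel of $\phi$ may acquire $\pOS$-torsion there, so one cannot naively quote the absolute proposition fibrewise; the bookkeeping of strict versus torsion cohomology furnished by \eqref{th:main2}--\eqref{th:main3} is what makes the reduction go through.

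Finally, for the reverse implication I would argue by reduction to the case just treated. Assuming (b), suppose $\phi$ were not injective and set $\shm':=\im\phi\subseteq\Db_{\XS/S}$, a strict regular holonomic $\DXS$-submodule. Dualizing the surjection $\shm\twoheadrightarrow\shm'$ (again in degree $0$ by \eqref{th:main2}) gives an inclusion $\Conj_{X,\ov X}^S(\shm')\hookrightarrow\shn$ which is proper, since $\Conj_{X,\ov X}^S$ is an equivalence and $\ker\phi\neq0$, and under which the tautological inclusion $\shm'\hookrightarrow\Db_{\XS/S}$ is sent to $\phi$. Applying the implication (a)$\Rightarrow$(b) already proved to this injective tautological section shows that $\phi$ generates the proper submodule $\Conj_{X,\ov X}^S(\shm')\subsetneq\shn$, contradicting (b). Hence $\phi$ is injective.
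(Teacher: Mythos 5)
Your formal reduction is the correct relative transcription of Kashiwara's argument for \cite[Prop.\,5]{Kashiwara86}, which is all the paper itself invokes for this proposition: applying $\Conj_{\ov X,X}^S$ to $0\to\shn'\to\shn\to\shn/\shn'\to0$ and using \eqref{th:main2} and \eqref{eq:ConjXSinvol} does give your five-term sequence, and the identification $\ker\lambda=\ker\phi$ is right. Note, however, that this formal part already proves (b)$\Rightarrow$(a) outright: if $\shn'=\shn$ then $\ker\phi=\shh^0\Conj_{\ov X,X}^S(\shn/\shn')=0$. Your separate contradiction argument for that direction is therefore superfluous, and it is also the more fragile one, since it invokes the implication (a)$\Rightarrow$(b) as well as regular holonomicity of the subquotients $\im\phi$ and $\shn/\shn'$ --- a stability property the paper only attributes to a forthcoming work.

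The genuine gap is the step you yourself single out as the core: having shown that (a) forces $\shn/\shn'$ to be $\pOS$-torsion, you assert, but do not prove, that conservativity of $Li_s^*$ together with the strict/torsion bookkeeping of \eqref{th:main2}--\eqref{th:main3} kills this torsion module. No argument of the kind you sketch can work from hypothesis (a) alone, because injectivity of $\phi$ is genuinely not inherited by $i_s^*\phi$ --- the very difficulty you name --- and the torsion quotient can in fact be nonzero. Concretely, take $\shm=\sho_{\XS}$ and let $\phi$ be the inclusion $\sho_{\XS}\subset\Db_{\XS/S}$ composed with multiplication by $s-s_o$. Then $\phi$ is $\DXS$-linear and injective; on the other hand $\shh^0\Conj_{X,\ov X}^S(\sho_{\XS})$ is the sheaf of solutions of $\partial_{x_i}u=0$, $\partial_{\ov s}u=0$ in $\Db_{\XS/S}$, that is, $\sho_{\XbS}$, while $\DXbS\cdot\phi=(s-s_o)\sho_{\XbS}$ is a proper submodule with nonzero torsion quotient $\sho_{\XbS}/(s-s_o)\sho_{\XbS}$. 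This is perfectly consistent with your five-term sequence (that quotient has vanishing $\shh^0$ under $\Conj_{\ov X,X}^S$), but it shows that the step ``torsion implies zero'' is not a matter of bookkeeping: under (a) alone it fails. What your (and Kashiwara's) argument genuinely yields in the relative setting is the equivalence ``$\phi$ injective $\iff$ $\shn/\DXbS\phi$ is $\pOS$-torsion''; to conclude generation one needs a strictly stronger input, for instance injectivity of $i_s^*\phi:i_s^*\shm\to\Db_X$ for every $s$, after which the fibrewise appeal to \cite[Prop.\,5]{Kashiwara86} plus a Nakayama argument does work --- and nothing in your proof derives such an input from (a). Since the example satisfies all the stated hypotheses, it also puts pressure on the literal statement (a)$\Rightarrow$(b) itself; in any case the route you describe cannot close the gap.
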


\begin{proof}[Sketch of proof]
Set $\shn=\shd_{\ov{X}\times S/S}\phi$ and let $\alpha:\shn\hto\Conj_{X,\ov X}^S(\shm)$ denote the inclusion. Since $\shm$ is regular holonomic and strict, so is $\Conj_{X,\ov X}^S(\shm)$ by Theorem \ref{th:main}, hence so is $\shn$ (\cf \cite[Prop.\,3.1(iii)]{FMFS22}). We consider the exact sequence of regular holonomic $\shd_{\ov{X}\times S/S}$-modules:\vspace*{-3pt}
\[
0\to\shn\To{\alpha}\Conj_{X,\ov X}^S(\shm)\to\shn'\to0.
\]
Applying $\Conj_{\ov X,X}^S$ to it and using Theorem \ref{th:main}\eqref{eq:ConjXSinvol}, we find an exact sequence\vspace*{-3pt}
\[
0\to\shh^0\Conj_{\ov X,X}^S(\shn')\to\shm\To{\beta}\Conj_{\ov X,X}^S(\shn)\to\shh^1\Conj_{\ov X,X}^S(\shn')\to0,
\]
with $\beta=\Conj_{\ov X,X}^S(\alpha)$. The argument given in the proof of \cite[Prop.\,5]{Kashiwara86} shows that $\ker\beta=\ker\varphi$.

If $\phi$ is injective, so is $\beta$, and \eqref{th:main2} and \eqref{th:main3} in Theorem \ref{th:main} imply that $\shh^1\Conj_{\ov X,X}^S(\shn')$ is of $S$-torsion, and therefore so is $\shn'$.

Conversely, if $\shn'$ is of $S$-torsion, we deduce similarly that $\ker\beta=0$, hence~$\phi$ is injective.
\end{proof}

Let $u$ be a nonzero relative distribution, that is, a local section of $\Db_{\XS/S}$. It generates a $\DXS$-submodule\enlargethispage{\baselineskip}%
\[
\DXS\cdot u\subset\Db_{\XS/S}
\]
and a $\DXbS$-submodule $\DXbS\cdot u\subset\Db_{\XS/S}$. Let us already notice that both are \emph{strict}, \ie do not have nonzero $\pOS$-torsion elements, as follows from \eqref{enum:obviousd} in the appendix.

We denote by $\Cb_{M\times T/T}$ the sheaf of relative currents of maximal degree, that is, the sheaf of relative forms on $\XS$ of maximal degree with coefficients in $\Db_{\XS/S}$ (\cf Section \ref{subsec:currents} of the appendix).

\begin{definition}\label{DRHD}
Let $u$ be a section of $\Db_{\XS/S}$ on an open subset $\Omega$ of $\XS$. We say that $u$ is a \textit{regular holonomic relative distribution on $\Omega$} if $\shd_{\XS/S} u$ is a (strict) regular holonomic $\DXS{}_{|\Omega}$-module.
Similarly, a section $u$ of $\Cb_{M\times T/T}$ on an open subset $\Omega\subset \XS$ is a \textit{regular holonomic relative current of maximal degree on $\Omega$} if $u \shd_{\XS/S} $ is a regular holonomic right $\DXS{}_{|\Omega}$-module.
\end{definition}

We obtain the analogue of Property \eqref{enum:3} above in the relative setting.
\begin{corollary}\label{PRHD}
Let $u$ be a section of $\Db_{\XS/S}$ on an open subset $\Omega$ of $\XS$. Then the following conditions are equivalent:
\begin{enumerate}
\item\label{PRHD1}
$\DXS u$ is regular holonomic on $\Omega$,
\item\label{PRHD2}
$\DXbS u$ is regular holonomic on $\ov{\Omega}$.
\end{enumerate}
Furthermore, in such a case, $\DXbS u\simeq\Conj_{X,\ov X}^S(\DXS u)$ up to $S$-torsion.
\end{corollary}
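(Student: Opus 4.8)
The plan is to deduce Corollary~\ref{PRHD} from the main equivalence of Theorem~\ref{th:main} together with the preliminary observation (already recorded via \eqref{enum:obviousd}) that $\DXS u$ and $\DXbS u$ are automatically strict. The statement is local, so I work on $\Omega$ and its conjugate $\ov\Omega$, which have the same underlying topological space. The key algebraic input is the compatibility, under the Hermitian duality functor $\Conj_{X,\ov X}^S$, between a cyclic regular holonomic $\DXS$-submodule $\DXS u$ of $\Db_{\XS/S}$ generated by the distribution $u$ and the cyclic $\DXbS$-submodule $\DXbS u$ generated by the same section.

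First I would assume \eqref{PRHD1}, namely that $\shm:=\DXS u$ is regular holonomic (hence strict) on $\Omega$. The inclusion $\shm\hookrightarrow\Db_{\XS/S}$ is a $\DXS$-linear morphism, \ie an injective local section $\phi$ of $\Conj_{X,\ov X}^S(\shm)=\Rhom_{\DXS}(\shm,\Db_{\XS/S})$; since $\shm$ is strict regular holonomic, part~\eqref{th:main2} of Theorem~\ref{th:main} guarantees that $\Conj_{X,\ov X}^S(\shm)$ is concentrated in degree $0$ and is itself strict, so $\phi\in\shh^0\Conj_{X,\ov X}^S(\shm)$ makes sense as an honest section. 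Applying Proposition~\ref{P:RHD1} to this injective $\phi$, I conclude that $\phi$ generates $\Conj_{X,\ov X}^S(\shm)$ as a $\DXbS$-module. Now I must identify the image of this generator inside $\Db_{\XS/S}$: unwinding the definition of the pairing $\shm\otimes_\CC\Conj_{X,\ov X}^S(\shm)\to\Db_{\XS/S}$, the section $\phi$ corresponds precisely to the distribution $u=\phi(u)$, so that the $\DXbS$-module generated by $\phi$ is carried isomorphically onto $\DXbS u\subset\Db_{\XS/S}$. This yields simultaneously the isomorphism $\DXbS u\simeq\Conj_{X,\ov X}^S(\DXS u)$ and, because $\Conj_{X,\ov X}^S$ lands in $\rD^\rb_{\rhol}(\DXbS)$, the regular holonomicity of $\DXbS u$ on $\ov\Omega$, which is exactly \eqref{PRHD2}.

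The reverse implication \eqref{PRHD2}$\Rightarrow$\eqref{PRHD1} follows by symmetry. Indeed, the roles of $X$ and $\ov X$ are interchangeable, and the involutivity isomorphism \eqref{eq:ConjXSinvol}, $\Conj_{\ov X,X}^S\circ\Conj_{X,\ov X}^S\simeq\id$, ensures that applying the same argument with $X$ and $\ov X$ swapped recovers the original module: starting from $\DXbS u$ regular holonomic and applying $\Conj_{\ov X,X}^S$ produces $\DXS u$ together with the inverse isomorphism, using that $\ov{\ov\Omega}=\Omega$ and that $u$ is fixed under the naive identification of $\Db_{\XS/S}$ with itself. Thus the two conditions are equivalent and the displayed isomorphism holds in either direction.

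The main obstacle I anticipate is the careful bookkeeping in the identification step, where one must verify that the abstract generator $\phi$ of $\Conj_{X,\ov X}^S(\shm)$ furnished by Proposition~\ref{P:RHD1} really corresponds, through the Hermitian pairing valued in $\Db_{\XS/S}$, to the concrete distribution $u$ generating $\DXbS u$. This requires tracing the definition of $\Conj_{X,\ov X}^S$ as $\Rhom_{\DXS}(-,\Db_{\XS/S})$ and checking that evaluation of the cocycle $\phi$ on the cyclic generator of $\shm$ reproduces $u$, and that the induced $\DXbS$-action on $\phi$ matches the left action on $\Db_{\XS/S}$ restricted to $\DXbS u$. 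Once this compatibility between the functorial and the distributional descriptions is established, strictness (from \eqref{enum:obviousd}) and part~\eqref{th:main2} of the main theorem make the degree and torsion-freeness statements automatic, and the rest is formal.
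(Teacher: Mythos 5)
Your proposal is correct and is essentially the paper's own proof: the paper likewise interprets the morphism $\DXS u\hto\Db_{\XS/S}$ (written there via the presentation $\shm=\DXS/\shj$ and $u^*:1\mto u$) as an injective section of $\Conj_{X,\ov X}^S(\shm)$, legitimate by strictness and Theorem \ref{th:main}\eqref{th:main2}, then invokes Proposition \ref{P:RHD1} to conclude this section generates $\Conj_{X,\ov X}^S(\shm)$ over $\DXbS$, identifies $\DXbS u^*$ with $\DXbS u$ by evaluation at the generator, and obtains the converse by symmetry. The only cosmetic difference is your explicit appeal to the involutivity \eqref{eq:ConjXSinvol} for the reverse implication, where the paper simply repeats the argument with $X$ and $\ov X$ exchanged.
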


\begin{proof}
Assume that \eqref{PRHD1} holds for $u$ and let $\shj\subset\DXS$ be the left ideal consisting of operators $P$ such that $Pu=0$ in $\Db_{\XS/S}$. Let us set $\shm=\DXS/\shj$. By definition, the $\DXS$-linear morphism
\[
\DXS\to \shd_{\XS/S} u,\qquad 1\mto u,
\]
induces an isomorphism $\shm\isom\shd_{\XS/S} u$, that we regard as an injective $\DXS$-linear morphism $\phi:\shm\hto\Db_{\XS/S}$, in particular a local section of $\ho_{\DXS}(\shm, \Db_{\XS/S})$. Thus the coherent $\DXbS$-submodule $\DXbS \phi$ of $\Conj_{X,\ov X}^S(\shm)$ is regular holonomic.  
Since $\phi$ is injective, Proposition~\ref{P:RHD1} implies that $\Conj_{X,\ov X}^S(\shm)/\DXbS\phi$, which is also regular holonomic, is of $S$-torsion. By construction, $\DXbS\phi$ is nothing but $\DXbS u$. This proves~\eqref{PRHD2} and the supplementary assertion of the proposition. Changing $u$ to $\ov u$ yields the converse.
\end{proof}

\begin{remark}
Embedding locally a regular holonomic $\DXS$-module $\shm$ in $\Db_{\XS/S}$ by a morphism $\phi$ can only occur if $\shm$ is strict, and even then, as we saw in Proposition \ref {P:RHD1}, this is not sufficient to conclude that $\Conj_{X,\ov X}^S\shm$ is locally cyclic as a $\DXbS$-module (an analogue of Property \ref{properties}\eqref{enum:4}); this holds only up to $S$-torsion, that is, away from the support of the cokernel of $\DXbS\phi\hto \Conj_{X,\ov X}^S\shm$. On the other hand, again in contrast with the absolute case, we do not know whether the local cyclicity property holds for any strict regular holonomic $\DXS$ or $\DXbS$-module. We consider examples in Section \ref{sec:3}. Nevertheless, Corollary \ref{PRHD} implies that, for $\shm$ regular holonomic and strict, the conjunction of~$\shm$ and $\Conj_{X,\ov X}^S\shm$ being locally cyclic implies that both are, up to $S$-torsion, locally generated (over $\DXS$ \resp $\DXbS$) by the same distribution~$u$.
\end{remark}

By definition, a strict holonomic $\DXS$-module $\shm$ is regular if and only if $i^*_s\shm$ is a regular holonomic $\shd_X$-module. This translates as follows for relative distributions.

\begin{proposition}\label{serie}
Let $\Omega'$ be an open subset in $\C^n$, $V$ an open disc centered at the origin in $\C$ and $\Omega=\Omega'\times V$. Let $u\in \Gamma (\Omega; \Db_{\XS/S})$ and let
\[
u=\sum_{m\geq 0}u_ms^m
\]
be the expansion of $u$ provided by Corollary \ref{cor:expansion}. If $u$ is regular holonomic, then each $u_m$ is regular holonomic. Conversely, any finite sum $\sum_{m= 0}^Nu_ms^m$, with $u_m\in\Gamma(\Omega';\Db_X)$ regular holonomic for each $m$, is regular holonomic.
\end{proposition}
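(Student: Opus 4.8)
The plan is to play off two facts that are available here: the fibrewise criterion recalled just above the Proposition, namely that a strict holonomic $\DXS$-module is regular holonomic if and only if $i^*_s$ of it lies in $\rD^\rb_\rhol(\shd_X)$ for every $s$; and the fact, coming from the formalism of \cite{MFCS2,FMFS19}, that $\Mod_\rhol(\DXS)$ is an abelian subcategory of $\Mod_\coh(\DXS)$ stable under submodules and quotients. Since every submodule of $\Db_{\XS/S}$ is strict by \eqref{enum:obviousd}, strictness will never have to be checked. For the converse I would first note that, for a distribution $u_m$ regarded as constant in $s$, a relative operator $P=\sum_k P_ks^k$ annihilates $u_m$ if and only if each $P_k\in\shd_X$ does, whence $\DXS u_m\simeq\sho_S\boxtimes\DX u_m$; this is holonomic, strict, with fibre the regular holonomic module $\DX u_m$, so regular holonomic by the criterion. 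Multiplication by $s^m$ being an injective $\DXS$-endomorphism of $\Db_{\XS/S}$ yields $\DXS(u_ms^m)\simeq\DXS u_m$, hence each $\DXS(u_ms^m)$ is regular holonomic; the finite sum $\sum_{m=0}^N\DXS(u_ms^m)$ is a quotient of the direct sum, hence regular holonomic, and $\DXS u$ is a submodule of it.

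For the direct implication I would peel off the coefficients one at a time, using restriction at $s=0$ and division by $s$. Put $v_0=u$ and $v_{k+1}=(v_k-u_k)/s$, so that $v_k=\sum_{j\ge0}u_{k+j}s^j$ and $u_k=i^*_0v_k$; the division is licit because $v_k-u_k\in s\,\Db_{\XS/S}$ and $\Db_{\XS/S}$ is $s$-torsion-free. I carry through the induction the statement that there is a regular holonomic submodule $\shn_k\subseteq\Db_{\XS/S}$ with $v_k\in\shn_k$, starting from $\shn_0=\DXS u$. As $\shn_k$ is strict, $i^*_0\shn_k$ is a regular holonomic $\shd_X$-module, and the $\shd_X$-linear evaluation $w\mapsto w|_{s=0}$ carries the coherent submodule $\shd_X\cdot(v_k\bmod s\shn_k)\subseteq i^*_0\shn_k$ onto $\DX u_k$; as a subquotient of a regular holonomic $\shd_X$-module, $\DX u_k$ is regular holonomic, which is exactly the conclusion for $u_k$.

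To continue I must build $\shn_{k+1}$, and this division-by-$s$ step is the heart of the matter. I would set $\shm_k'=\shn_k+\DXS u_k$, regular holonomic as a quotient of $\shn_k\oplus\DXS u_k$ (the module $\DXS u_k$ being regular holonomic as in the converse), and observe that $s\,v_{k+1}=v_k-u_k\in\shm_k'$, so $v_{k+1}$ lies in the $s$-saturation $\shn_{k+1}:=\{\eta\in\Db_{\XS/S}:s\eta\in\shm_k'\}$. Multiplication by $s$ is an isomorphism $\shn_{k+1}\isom s\,\shn_{k+1}$, and $s\,\shn_{k+1}$ fits in an exact sequence $0\to s\,\shn_{k+1}\to\shm_k'\to\shm_k'/s\,\shn_{k+1}\to0$ whose cokernel, being a quotient of $i^*_0\shm_k'$, is a regular holonomic $\shd_X$-module concentrated on $\{s=0\}$ and hence regular holonomic over $\DXS$. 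Applying $Li^*_s$ and using that $\rD^\rb_\rhol(\shd_X)$ is triangulated, $Li^*_s(s\,\shn_{k+1})\in\rD^\rb_\rhol(\shd_X)$ for every $s$; since $s\,\shn_{k+1}$ is coherent, the fibrewise criterion makes it, and therefore $\shn_{k+1}$, regular holonomic, which closes the induction.

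The main obstacle is precisely this division: $s^{-1}$ is not an operation on $\DXS$-modules and could a priori enlarge the module beyond holonomicity. The device that rescues the argument is to confine $v_{k+1}$ inside the explicit saturation $\shn_{k+1}$ of an already regular holonomic module, and then to tame $\shn_{k+1}$ through the isomorphism $\shn_{k+1}\simeq s\,\shn_{k+1}\hookrightarrow\shm_k'$ whose cokernel is supported on $\{s=0\}$, after which the fibrewise criterion finishes the job. By contrast, the restriction-at-$s=0$ step and the whole converse are formal consequences of the abelian structure of $\Mod_\rhol(\DXS)$ together with that same criterion.
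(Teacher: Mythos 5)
Your proof is correct, and its skeleton coincides with the paper's: the converse is handled via modules generated by $s$-constant distributions together with stability of $\Mod_\rhol(\DXS)$ under subquotients in $\Mod_\coh(\DXS)$, and the direct implication by extracting $u_k$ through restriction at $s=0$, subtracting, dividing by $s$, and inducting. The genuine difference is the treatment of the division step, which you rightly call the heart of the matter. The paper disposes of it with one observation: since $\Db_{\XS/S}$ has no $\sho_S$-torsion (\eqref{enum:obviousd}), a section $v$ and the section $f(s)v$ have the \emph{same} annihilator in $\DXS$ for any nonzero $f\in\sho(V)$, hence $\DXS v\simeq\DXS(f(s)v)$ and regular holonomicity passes between them in both directions. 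You prove precisely this fact (for $f=s^m$) in your converse paragraph, but then do not exploit it where it matters: applied to $v_{k+1}$ it yields $\DXS v_{k+1}\simeq\DXS(sv_{k+1})=\DXS(v_k-u_k)\subseteq\shm_k'=\shn_k+\DXS u_k$, which is regular holonomic by subquotient stability, and the induction closes in one line. Your alternative route through the $s$-saturation $\shn_{k+1}$, the exact sequence and the fibrewise criterion does work, but it is exactly where your write-up is thinnest: the coherence of $s\shn_{k+1}$ is asserted rather than proved, and it is not automatic, since an arbitrary $\DXS$-submodule of a coherent module need not be coherent. It can be repaired with the absolute theory: $s\shn_{k+1}/s\shm_k'$ is a $\shd_X$-submodule of the regular holonomic module $i^*_0\shm_k'$, hence coherent and regular holonomic (holonomic $\shd_X$-modules have finite length, and arbitrary subquotients of regular holonomic $\shd_X$-modules remain regular holonomic), after which $s\shn_{k+1}$ is the kernel of a surjection between coherent $\DXS$-modules; but none of this machinery is needed once the annihilator observation is used. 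One last point, which concerns the paper's own proof as much as yours: the existence of $v_{k+1}$, that is, the fact that $v_k-u_k$ actually lies in $s\Db_{\XS/S}$, does not follow from torsion-freeness (which only gives uniqueness of the quotient); it requires a small lemma, provable from Proposition \ref{prop:horm} by writing $v_k-u_k=\sum_\alpha\partial_x^\alpha f_\alpha$ and dividing each continuous, $s$-holomorphic function $f_\alpha-f_\alpha|_{s=0}$ by $s$, so it would be worth recording explicitly.
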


\begin{proof}
We first notice that, for $v\in \Gamma (\Omega; \Db_{\XS/S})$ and $f\in\sho(V)$ nonzero, if $f(s)v$ is regular holonomic, then so is $v$, since for $P\in\Gamma (\Omega; \DXS)$, the relation $f(s)Pv=0$ in $\Gamma (\Omega; \Db_{\XS/S})$ is equivalent to $Pv=0$. We can therefore assume in the proof that $u_0\neq0$. If $\shi_u\subset\Gamma (\Omega; \Db_{\XS/S})$ denotes the ideal of operators satisfying $Pu=0$ and if we write for such an operator $P=P_0+sP'$ with $P_0\in\Gamma(\Omega',\shd_X)$, we have $i^*_0\shi_u=\{P_0\mid P\in\shi_u\}$. The relation $Pu=0$ implies $P_0u_0=0$, so $u_0=i^*_0u$ is annihilated by $i^*_0\shi_u$, hence is regular holonomic. It~follows that $u-u_0$ is also relatively regular holonomic, and processing that way we obtain the first assertion.

For the second assertion, we note that $\shd_{\XS/S}u$ is a coherent submodule of $\shd_{\XS/S}\otimes_{\shd_X}(\bigoplus_m\shd_Xu_m)$ and the assertion follows since $\Mod_{\rhol}(\DXS)$ is stable by sub-quotients in $\Mod_{\coh}(\DXS)$ (\cf \cite[Prop.\,3.1(iii)]{FMFS22}).
\end{proof}

We also prove, by mimicking the proof given by Björk \cite[\S\S VII.4 \& VII.5]{Bjork93}, that regular holonomicity of relative distributions (and currents) is preserved under pushforward with proper support and non-characteristic pullback.

We say that an embedding \hbox{$i_Y:Y\times S\hto \XS$} of a closed submanifold~$Y$ of $X$ is non-characteristic for a coherent $\DXS$-module~$\shm$~if
\[
\Char(\shm)\cap T^*_Y \XS\subset T^*_X \XS.
\]
In such a case, $\Di_Y^*\shm=\shh^0\Di_Y^*\shm$ is $\DYS$-coherent. Moreover, according to \cite[Th.\,2]{FMFS19} it follows that, if $ \shm$ is regular holonomic, then so is $\shh^0\Di_Y^*\shm$.

Let $M$ be a real analytic manifold, let $u$ be a section of $\Db_{M\times S/S}$ on an open subset $\Omega\subset M\times S$. We say that $u$ is regular holonomic on~$\Omega$~if, for each $(m, s)\in\Omega$, there exists a neighborhood $V\times W\subset \Omega$ of $(m,s)$, a complexification $X$ of $V$, a coherent left ideal $\shi$ of $\DXS$ defined in a neighborhood $\Omega':=V'\times W\subset V\times W$ of $(m,s)$ in $\XS$ such that \hbox{$\shi|_{\Omega'\cap M\times S} \cdot u|_{\Omega'\cap M\times S}=0$} and $\DXS/\shi$ is regular holonomic. In that situation, we denote $\DXS u$ instead of $\DXS/\shi$.

We finally recall that the analytical wave front set of a distribution $u$ on a real manifold $Z$, noted by $WF_A(u)$, is an $\R_+$- conic closed subset of $T^*Z$ which can be defined as the support of the Sato's microfunction defined by~$u$.

\begin{proposition}\mbox{}\label{P10}
\begin{enumerate}
\item\label{P10a}
Let $u$ be a relative regular holonomic current of maximal degree on $\XS$ such that $(f\times\id)|_{\Supp u}$ is proper. Then $\int_fu$ is a regular holonomic relative current on $\YS$.

\item\label{P10b}
Let $N\subset M$ be real analytic manifolds and let $i_Y:Y\subset X$ be their respective germs of complexifications (in particular, $Y\cap M=N)$. Let~$\Omega$ be an open subset of $M\times S$ and let $u\in \Gamma(\Omega,\Db_{M\times S/S})$ be such that $\shm:=\DXS u$ is regular holonomic. Assume that $Y\times S$ is non-characteristic for $\shm$, so that $\Di_Y^*\shm=\shh^0\Di_Y^*\shm$ is regular holonomic. Then the restriction $u_{|N\times S}$ is a relative distribution on $(\Omega\cap N)\times S$ such that $\DYS\cdot u_{|N\times S}\simeq\shh^0\Di_Y^*\shm|_{N\times S}$.
\end{enumerate}
\end{proposition}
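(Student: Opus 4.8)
The plan is to reduce both assertions to the absolute statements of Bj\"ork \cite[\S\S VII.4 \& VII.5]{Bjork93} by exploiting the compatibility of the two constructions with the fiber restriction functors $Li^*_s$ and the conservativity of the latter recalled in the introduction. For \eqref{P10a}, Definition \ref{DRHD} says exactly that $u\shd_{\XS/S}$ is a regular holonomic right $\DXS$-module. Since $(f\times\id)|_{\Supp u}$ is proper, the direct image $\Df_*(u\shd_{\XS/S})$ is defined and, by the stability property \cite[Cor.\,2.4]{MFCS2} recalled as item (1) of the introduction, it lies in $\rD^\rb_\rhol(\DYS)$; in particular each of its cohomology sheaves is a regular holonomic $\DYS$-module. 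I would then use the compatibility of the integration $\int_f$ of relative currents of maximal degree with the $\shd$-module direct image (through the transfer bimodule) to produce a canonical $\DYS$-linear morphism exhibiting $(\int_f u)\shd_{\YS/S}$ as a subquotient of a cohomology sheaf of $\Df_*(u\shd_{\XS/S})$. Since $\Mod_\rhol(\DYS)$ is stable by subquotients in $\Mod_{\coh}(\DYS)$ (the forthcoming property already invoked in the proof of Proposition \ref{serie}), the module $(\int_f u)\shd_{\YS/S}$ is regular holonomic, which is the claim.

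For \eqref{P10b}, I would first check that $u|_{N\times S}$ is a well-defined relative distribution on $(\Omega\cap N)\times S$. The non-characteristic hypothesis $\Char(\shm)\cap T^*_Y\XS\subset T^*_X\XS$ bounds, fiberwise, the analytic wave front set $WF_A(u_s)$ away from the conormal $T^*_N M$, so the classical restriction of a distribution to a submanifold applies to each $u_s$; these restrictions vary holomorphically in $s$ and glue to a relative distribution $u|_{N\times S}$. Writing $\shm=\DXS/\shi$ with $\shi u=0$, the non-characteristic restriction $\shh^0\Di_Y^*\shm$ is the coherent $\DYS$-module $\DYS/\shi_Y$, where $\shi_Y$ is the image of $\shi$, and since $\shi_Y$ annihilates $u|_{N\times S}$ there is a canonical surjection
\[
\shh^0\Di_Y^*\shm|_{N\times S}\twoheadrightarrow\DYS\cdot u|_{N\times S}.
\]

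It remains to show this surjection is an isomorphism, and this is where the reduction to the absolute case enters. Non-characteristic pullback and fiber restriction commute by base change, so $Li^*_s\,\shh^0\Di_Y^*\shm\simeq\shh^0\Di_Y^*(i^*_s\shm)$; likewise $i^*_s(\DXS u)=\shd_X u_s$ and $i^*_s(\DYS\cdot u|_{N\times S})=\shd_Y\cdot u_s|_N$, so $Li^*_s$ carries the displayed morphism to the corresponding absolute morphism. By Bj\"ork \cite[\S\S VII.4 \& VII.5]{Bjork93} the latter is an isomorphism $\shh^0\Di_Y^*(\shd_X u_s)|_N\isom\shd_Y\cdot u_s|_N$ for every $s\in S$, and since $Li^*_s$ is conservative on $\rD^\rb_\hol(\DXS)$ (\cf\cite{MFCS1,MFCS2}) the relative morphism is itself an isomorphism. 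I expect the main obstacle to be establishing these base-change compatibilities uniformly in $s$ — namely that $u|_{N\times S}$ is genuinely relative (holomorphic in $s$) and that both $\int_f$ and $\Di_Y^*$ commute with $Li^*_s$ — since it is precisely this that allows Bj\"ork's fiberwise results to be imported and then globalized through conservativity.
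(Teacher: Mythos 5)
Your treatment of \eqref{P10a} is essentially the paper's proof: one identifies $\int_f u$ with a section of the $0$th pushforward of $\DXS\cdot u$ (this is where Proposition \ref{prop:Cacyclic} enters), notes that this pushforward lies in $\rD^\rb_\rhol(\DYS)$ by \cite[Cor.\,2.4]{MFCS2}, and concludes by stability of $\Mod_\rhol(\DYS)$ under subquotients in $\Mod_\coh(\DYS)$. The one point you skip is the hypothesis of \cite[Cor.\,2.4]{MFCS2} itself: since $f\times\id$ is only proper on $\Supp u$ (not projective), you must check $f$-goodness of $\DXS\cdot u$; the paper does so by observing that this module is globally generated.

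Part \eqref{P10b} is where your proposal genuinely breaks down, in two places. First, the existence of $u_{|N\times S}$ as a \emph{relative} distribution cannot be obtained by restricting each $u_s$ and declaring that the family ``varies holomorphically in $s$ and glues'': a family of distributions on the fibers does not assemble into a distribution on $(N\cap\Omega)\times S$ without joint control in all variables, and holomorphy in $s$ of the restricted family is precisely what has to be proved, not assumed. The paper instead bounds the wave front set of $u$ viewed as a distribution on $M\times S_\R$ jointly --- using $T^*_NM=T^*_MX\cap T^*_YX$, Sato's theorem and the non-characteristic hypothesis, which give $WF_A(u)\cap(T^*_NM\times S)\subset T^*_MM\times S$ --- so that $u$ itself can be restricted to $N\times S_\R$; the restriction is then again relative because $\partial_{\ov s}$ commutes with restriction to $N\times S_\R$. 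Second, your reduction to Bj\"ork via conservativity requires the base-change identities $i^*_s(\DXS u)\simeq\shd_X u_s$ and $i^*_s(\DYS\cdot u_{|N\times S})\simeq\shd_Y\cdot(u_s)_{|N}$. These are not isomorphisms in general, only surjections: the annihilator of $u_s$ in $\shd_X$ may be strictly larger than the specialization at $s$ of the annihilator ideal of $u$ (compare the proof of Proposition \ref{serie}, which is careful to use only the resulting surjection). Consequently $Li^*_s$ applied to your morphism is not Bj\"ork's morphism, and the conservativity of $(Li^*_s)_{s\in S}$ cannot be invoked as stated. This is exactly the obstacle you flag in your last sentence, but it is not a routine compatibility to be checked --- closing it is essentially as hard as the proposition itself; the paper avoids it altogether by running Bj\"ork's argument \cite[7.5.4]{Bjork93} directly in the relative setting rather than fiberwise.
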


\pagebreak[2]
\begin{proof}\mbox{}
\begin{enumerate}
\item
This part is completely similar to that of \cite[Th.7.4.11]{Bjork93}. One shows that the integral of $u$ along $f\times\id$, as defined in Section \ref{subsec:currents}, is a section of the $0$th pushforward of the $\DXS$-module $\DXS\cdot u$. This uses Proposition \ref{prop:Cacyclic}. Note that, since $\DXS\cdot u$ is globally generated, it is $f$-good. It~follows then from the stability of regular holonomic $\DXS$-modules by $f$-good pushforward (\cf\cite[Cor.\,2.4]{MFCS2}) that the latter is a regular holonomic $\DYS$-module. We~conclude by using the property that the category $\Mod_{\rhol}(\DXS)$ is stable by sub-quotients in $\Mod_{\coh}(\DXS)$, as already mentioned in the proof of Proposition~\ref{serie}.

\item
Let us prove \ref{P10}\eqref{P10b}. We first remark that $T_N^*M=T^*_MX\cap T^*_YX$. Hence, by the assumption, according to a well-known result by M.\,Sato, the analytical wave front set of $u$ seen as a distribution on $M\times S_{\R}$, $WF_A(u)$, satisfies
\[
WF_A(u)\cap (T^*_N M\times S)\subset T^*_M M\times S
\]
The proof then proceeds like in \cite[7.5.4]{Bjork93} in view of the fact that~$\partial_{\ov{s}}$ is a continuous linear operator on $\Db_{M\times S_{\R}}$ commuting with the restriction to $N\times S_{\R}$ of $C^{\infty}$-functions on open subsets of $M\times S_{\R}$. Therefore, by construction, $u_{|N\times S}$ is a relative distribution on $(N\cap \Omega)\times S$ such that $\DYS\cdot u_{|N\times S}\simeq\shh^0\Di^*\shm|_{N\times S}$.\qedhere
\end{enumerate}
\end{proof}

In the absolute case, Andronikof in \cite[Th.\,1.2]{A} and Björk in \cite[Th.\,8.11.8]{Bjork93} proved that, if $u$ is a regular holonomic distribution, then the characteristic variety of the regular $\shd_X$-module $\shd_X u$ satisfies
\[
WF_A(u)=\Char(\shd_X u).
\]

For relative regular holonomic distributions, the inclusion $\subset$ is obvious:

\begin{proposition}\label{WFA}
Given a regular holonomic relative distribution $u$, we have $WF_A(u)\subset\Char(\DXS u)$ (where $S$ is identified to $T^*_SS$).
\end{proposition}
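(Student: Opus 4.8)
The plan is to reduce the statement to the classical microlocal ellipticity theorem for solutions of analytic differential systems, applied on the real-analytic manifold $M\times S_{\R}$ underlying $\Omega$, and then to keep careful track of the cotangent directions through the relative structure. We may work locally, fixing coordinates $z=(z_1,\dots,z_n)$ on a complexification $X=\C^n$ of $M$ and a coordinate $s$ on $S$, so that $u$ is an ordinary distribution on $M\times S_{\R}$ with cotangent fibre coordinates $(\xi,\sigma)$, where $\xi$ is dual to $x$ and $\sigma\in\R^2$ is dual to the real and imaginary parts of $s$. The target $\Char(\DXS u)$ lives in $T^*X\times T^*_S S$, the $S$-covector being zero; thus for the inclusion to be meaningful the first task is to locate $WF_A(u)$ in the same ambient space.

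First I would show $WF_A(u)\subset\{\sigma=0\}$ by applying the theorem \textquotedblleft$Pv=0\Rightarrow WF_A(v)\subset\Char(P)$\textquotedblright\ to the Cauchy--Riemann operator $P=\partial_{\ov s}$. Indeed $\partial_{\ov s}u=0$ by the very definition of $\Db_{\XS/S}$, and the principal symbol of $\partial_{\ov s}$ vanishes only where $\sigma=0$ (a real covector $\sigma=(\sigma_1,\sigma_2)$ with $\sigma_1+\sqrt{-1}\,\sigma_2=0$ must vanish). Under the identification $S\simeq T^*_S S$, this places $WF_A(u)$ inside $\sqrt{-1}\,T^*_M X\times S\subset T^*X\times S$, the same ambient space as $\Char(\DXS u)$.

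Next I would run the identical microlocal ellipticity argument for the holomorphic relative operators. Let $\shi\subset\DXS$ be the annihilating ideal of $u$, so that $\DXS u=\DXS/\shi$ and, by construction, $\Char(\DXS u)=\bigcap_{P\in\shi}\{\sigma_{\mathrm{rel}}(P)=0\}$. For each $P\in\shi$ one has $Pu=0$ on $M\times S$; viewing $P$ as an operator on the real manifold through the standard action of $\DXS$ on relative distributions (under which $\partial_{z_i}$ acts as $\partial_{x_i}$), the theorem gives $WF_A(u)\subset\Char(P)$. Since $P$ differentiates only in the $X$-directions, $\Char(P)$ is the cylinder over the relative characteristic set $\{\sigma_{\mathrm{rel}}(P)=0\}$; intersecting over all $P\in\shi$ and restricting to $\{\sigma=0\}$ by the previous step yields $WF_A(u)\subset\Char(\DXS u)$, the real covector $\xi$ being matched to $\sqrt{-1}\,\xi\in T^*X$ via the real-form embedding.

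The one delicate point — the main, though entirely classical, obstacle — is the bookkeeping of the identifications: that Sato microlocalization places $WF_A(u)$ in the purely imaginary conormal $\sqrt{-1}\,T^*_M X$, and that the real principal symbol of $P\in\DXS$ computed on $M$ matches, after this embedding $\xi\mapsto\sqrt{-1}\,\xi$, the relative principal symbol defining $\Char(\DXS u)$ as a complex-analytic subset of $T^*X\times S$. Once these are aligned, neither regularity nor holonomicity is used: only coherence of $\DXS u$ (to define $\shi$ and $\Char$) enters, and the inclusion is the elementary half of the statement, valid for any coherent relative system exactly as in the absolute case of \cite[Th.\,1.2]{A} and \cite[Th.\,8.11.8]{Bjork93}. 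The reverse inclusion, which would propagate analyticity of $u$ off the characteristic variety via ellipticity of $P$ and genuinely requires regularity, is deliberately not claimed here.
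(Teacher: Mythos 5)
Your proof is correct and follows essentially the same route as the paper's: both arguments rest on the classical microlocal bound $WF_A(u)\subset\Char$ for distribution solutions of analytic systems, combined with the Cauchy--Riemann equation $\partial_{\ov s}u=0$ to confine the covectors in the $S$-directions to the zero section. The only cosmetic difference is that the paper applies the theorem once to the induced absolute module $\shd_{\XS}\otimes_{\DXS}\DXS u$ and then invokes $\partial_{\ov s}u=0$, whereas you apply the single-operator version to $\partial_{\ov s}$ and to each element of the annihilating ideal and intersect --- the same argument in ideal-theoretic rather than module-theoretic form.
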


\begin{proof}
Regarding $u$ as a distribution solution of the coherent $\shd_{\XS}$-module $\shd_{\XS}\otimes_{\DXS}\DXS u$, we have $WF_A(u)\subset \Lambda\times T^*S$. On the other hand, since $u$ satisfies $\partial_{\ov{s}}u=0$, it follows that
$WF_A(u)\subset \Lambda\times S$.
\end{proof}

In order to prove the converse inclusion in Proposition \ref{WFA}, one should develop a relative variant of the microlocal techniques used in \cite{A} which is out of the scope of this work.

\remark
 In the conditions of Corollary \ref{cor:expansion}, if $\phi=\sum_{i=0}^L \phi_is^i$ then $\phi$ is regular holonomic in the absolute sense, that is, as a section of $\Db_{X\times S}$. Indeed each term $\phi_i s^i$ is regular holonomic in the absolute sense since it satisfies the ideal generated in $\shd_{X\times S}$ by the the annihilator ideal of $\phi_i$ in $\shd_X$ and by $\partial_s^{i+1}$.
%thus $$WF_A \phi\subset\Char \tilde{\shm}=\Char (\shm)$$

%Therefore, to have the equality in Proposition \ref{WFA} in that case we should have $$\Char(\tilde{\shm})=\Char(\shd_{X\times S}\otimes_{\DXS}\shm)$$ not only the obvious inclusion $\subset$. 

\begin{example}
Let us assume $X=\C=S$, and let us consider the relative regular holonomic distribution $\phi=x+\delta(0)s$. Let $\DXS \phi=\DXS \shj$.  If $P\in\shj$ that is, $Px=-P\delta(0)s$, then clearly $Px=P\delta(0)=0$ and conversely. Hence $\Char(\DXS\phi)=(T^*_{\{0\}}X\cup T^*_XX)\times S=WF_A(\phi)$. 
\end{example}

\begin{example} For $X=S=\C$, $\phi(z,\ov{z}, s)=e^{s\ov{z}}$ and $|z|^s$ are examples of relative regular holonomic distributions which are not regular holonomic in the absolute sense (they are not even holonomic).
\end{example}

\section{Proof of the main theorem}

We first reduce the proof of Theorem \ref{th:main} to the following three statements.

\begin{enumerate}\renewcommand{\theenumi}{\roman{enumi}}
\item\label{enum:i}
$\Conj_{X,\ov X}^S$ sends $\rD^\rb_{\rhol}(\DXS)^\op$ to $\rD^\rb_{\rhol}(\DXbS)$.
\item\label{enum:ii}
For any $s_o\in S$, $Li_{s_o}^*\circ\Conj_{X,\ov X}^S\simeq\Conj_{X}$.
\item\label{enum:iii}
\eqref{eq:ConjXSconjSol} holds.
\end{enumerate}

\begin{proof}[Proof of Theorem \ref{th:main}\eqref{th:main1} and \eqref{th:main2} assuming \eqref{enum:i}, \eqref{enum:ii} and \eqref{enum:iii}]
Since the family of functors $(Li_{s_o}^*)_{s_o\in S}$ is conservative on $\rD^\rb_{\hol}(\DXS)$, we deduce from \eqref{enum:i}, \eqref{enum:ii} and \cite{Kashiwara86} that
\eqref{eq:ConjXS} and \eqref{eq:ConjXSinvol} in (\ref{th:main1})
hold. Then \eqref{enum:iii} is enough to conclude the proof of \eqref{th:main1}.

For \eqref{th:main2}, we use that $Li_{s_o}^*\shm\in\Mod_\rhol(\DX)$ if $\shm$ is strict, and the assertion follows from \cite[Th.\,2(i)]{Kashiwara86} and \eqref{enum:ii}, together with \cite[Prop.\,1.9]{MFCS2}.
\end{proof}

\begin{proof}[Proof of \eqref{enum:ii}]
We note that, by Proposition \ref{memento} in the appendix, we have $Li^{*}_{s_o}\Db_{\XS/S}\simeq \Db_X$ for any $s_o\in S$. We~conclude by applying \cite[(A.10)]{Kashiwara03}.
\end{proof}

\begin{proof}[Proof of \eqref{enum:iii} assuming \eqref{enum:i}]
We have
\begin{align*}
\pDR_{\ov X} (\Conj_{X,\ov X}^S(\shm))&=\Rhom_{\shd_{\XbS/S}}(\sho_{\XbS}, \Rhom_{\DXS}(\shm, \Db_{\XS/S}))[d_X]
\\
&\overset{(*)}\simeq \Rhom_{\DXS}(\shm,\Rhom_{\shd_{\XbS/S}}(\sho_{\XbS}, \Db_{\XS/S}))[d_X]
\\
&\overset{(**)}\simeq \pSol_{X}(\shm),
\end{align*}
where $(*)$ follows from a standard argument (\cf \cite[p.\,241]{Kashiwara03}) and $(**)$ follows from the relative Dolbeault-Grothendieck lemma $\DR_{\ov X}(\Db_{\XS/S})\simeq \sho_{\XS}$ (\cf\cite[\S3.1]{SS1}). Since $\shm$ and $\Conj_{X,\ov X}^S(\shm)$ have holonomic cohomologies (according to \eqref{enum:i} for the latter), we can apply \cite[(5) \& Cor.\,3.9]{MFCS1} to obtain
\begin{align*}
\pSol_{\ov X}(\conj_{X,\ov X}^S(\shm))&\simeq\bD\pDR_{\ov X} (\Conj_{X,\ov X}^S\bD\shm)\\
&\simeq\bD\pSol_{X}(\bD\shm)\simeq\pSol_{X}(\shm).\qedhere
\end{align*}
\end{proof}

The proof of \eqref{enum:i} will follow the same strategy as for that of \cite[Th.\,3]{FMFS19}. We consider the statement $P_X$ defined, for any complex manifold $X$ and any $\shm\in\rD^\rb_{\rhol}(\DXS)$, by\vspace*{-3pt}
\[
\tag*{$P_X(\shm)$:}\Conj_{X,\ov X}^S(\shm)\in\rD^\rb_{\rhol}(\DXbS),
\]
in other words, \eqref{enum:i} holds for $\shm$. Properties (a)--(d) of \cite[Lem.\,3.6]{FMFS19} clearly hold for $P_X$. We need to check Property (f) of \loccit, that is, the truth of $P_X(\shm)$ when $\shm$ is torsion.

The following result also proves \eqref{th:main3} of Theorem \ref{th:main}:
\begin{lemma}\label{lem:torsion}
Assume that $\shm\in\Mod_\rhol(\DXS)$ is torsion. Then $P_X(\shm)$ is true. Moreover, $\Conj_{X,\ov X}^S(\shm)[1]\simeq\shh^1\Conj_{X,\ov X}^S(\shm)$.
\end{lemma}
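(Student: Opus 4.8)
The plan is to use the torsion hypothesis to reduce $\Conj_{X,\ov X}^S$ to Kashiwara's absolute functor $\Conj_X$, up to a shift by $1$. All the assertions are local on $\XS$, so I would first reduce to the case where $S$ is a disc with coordinate $s$ centered at a point $s_o$ and, since $\shm$ is coherent and $\pOS$-torsion, where $s^k\shm=0$ for some $k\geq1$; the argument then proceeds by induction on $k$. The essential input, to be taken from the appendix, is Proposition~\ref{memento}: the identification $Li^*_{s_o}\Db_{\XS/S}\simeq\Db_X$ concentrated in degree~$0$ says precisely that multiplication by $s$ is injective on $\Db_{\XS/S}$ with cokernel $\Db_X$, that is, there is a short exact sequence
\[
0\to\Db_{\XS/S}\xrightarrow{\,s\,}\Db_{\XS/S}\to\Db_X\to0.
\]

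For the base case $k=1$ I would regard $\shm$ as a $\shd_X$-module via $\shd_X=\DXS/s\DXS$. Using the free resolution $0\to\DXS\xrightarrow{\,s\,}\DXS\to\shd_X\to0$ together with the injectivity of $s$ on $\Db_{\XS/S}$ gives $\Rhom_{\DXS}(\shd_X,\Db_{\XS/S})\simeq\Db_X[-1]$, as a right $\DXbS$-module killed by $s$. The change-of-rings adjunction for $\DXS\twoheadrightarrow\shd_X$ then yields
\[
\Conj_{X,\ov X}^S(\shm)\simeq\Rhom_{\shd_X}(\shm,\Db_X)[-1]\simeq\Conj_X(\shm)[-1].
\]
By Kashiwara (Property~\ref{properties}\eqref{enum:1}), $\Conj_X(\shm)$ is a regular holonomic $\shd_{\ov X}$-module concentrated in degree~$0$. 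Hence $\Conj_{X,\ov X}^S(\shm)$ is concentrated in degree~$1$ and $\shh^1\Conj_{X,\ov X}^S(\shm)\simeq\Conj_X(\shm)$, viewed as a $\DXbS$-module killed by $s$, is torsion and regular holonomic (its only nonzero restriction $Li^*_{s_o}$ equals $\Conj_X(\shm)\in\Mod_\rhol(\shd_{\ov X})$). This already establishes $P_X(\shm)$, the concentration in degree~$1$, and the torsion statement \eqref{th:main3} in this case.

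For the inductive step I would use the short exact sequence $0\to s\shm\to\shm\to\shm/s\shm\to0$, in which $\shm/s\shm$ is killed by $s$ and $s\shm$ is killed by $s^{k-1}$; both are regular holonomic by stability of $\Mod_\rhol(\DXS)$ under subquotients in $\Mod_\coh(\DXS)$, as invoked in the proof of Proposition~\ref{serie}. Applying the contravariant functor $\Conj_{X,\ov X}^S$ produces a distinguished triangle
\[
\Conj_{X,\ov X}^S(\shm/s\shm)\to\Conj_{X,\ov X}^S(\shm)\to\Conj_{X,\ov X}^S(s\shm)\xrightarrow{+1}.
\]
By induction the two outer terms are concentrated in degree~$1$ with torsion regular holonomic cohomology, so the long exact sequence forces $\shh^i\Conj_{X,\ov X}^S(\shm)=0$ for $i\neq1$ together with a short exact sequence
\[
0\to\shh^1\Conj_{X,\ov X}^S(\shm/s\shm)\to\shh^1\Conj_{X,\ov X}^S(\shm)\to\shh^1\Conj_{X,\ov X}^S(s\shm)\to0,
\]
exhibiting $\shh^1\Conj_{X,\ov X}^S(\shm)$ as an extension of torsion regular holonomic $\DXbS$-modules, hence itself torsion and regular holonomic. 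This finishes the induction and yields $P_X(\shm)$, the isomorphism $\Conj_{X,\ov X}^S(\shm)[1]\simeq\shh^1\Conj_{X,\ov X}^S(\shm)$, and \eqref{th:main3}.

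The one genuinely delicate point is the base-case identification: one must check that the change-of-rings adjunction, combined with the resolution of $\shd_X$, produces exactly $\Conj_X(\shm)[-1]$ with the correct right $\DXbS$-structure (namely the $\shd_{\ov X}$-action, killed by $s$). Everything else is formal dévissage once the behaviour of $\Db_{\XS/S}$ under multiplication by $s$ is known, and that behaviour is precisely what Proposition~\ref{memento} provides.
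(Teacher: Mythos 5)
Your proof is correct and takes essentially the same route as the paper: after localizing and reducing to the case $s\shm=0$ (the paper's ``standard extension argument'' via Property (c) of \cite[Lem.\,3.6]{FMFS19} is exactly your induction on $k$), both arguments rest on the Koszul resolution by $s$, the identification of $\Db_{\XS/S}/s\Db_{\XS/S}$ with $\Db_X$ provided by Proposition~\ref{memento}, and an adjunction reducing everything to Kashiwara's absolute theorem, which places the regular holonomic cohomology in degree $1$. The only cosmetic difference is that the paper writes $\shm\simeq q_X^{*}\shm'\otimes_{\pOS}p^{-1}_X(\sho_S/s\sho_S)$ and applies tensor-hom adjunction over $\pOS$, whereas you apply the change-of-rings adjunction for $\DXS\twoheadrightarrow\DXS/s\DXS$ directly; these are the same computation.
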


\begin{proof}
The property is local with respect to $X$ and $S$, and we can assume that $S=\CC$ with coordinate $s$ and that $s^N\shm=0$ for some $N\geq1$. By a standard extension argument, and due to Property (c) of \cite[Lem.\,3.6]{FMFS19}, we can assume that $s\shm=0$.

Then, by division by $s$, we can write $\shm\simeq q_X^{*}\shm'\otimes_{p^{-1}_X\sho_S}p^{-1}_X(\sho_S/s\sho_S)$ where $q_X$ denotes the projection $\XS\to X$, for some regular holonomic $\shd_X$-module $\shm'$.
Hence\vspace*{-3pt}
\begin{align*}
\Conj_{X,\ov X}^S(&\shm)\simeq \Rhom_{\DXS}(q_X^{*}\shm'\otimes_{p^{-1}_X\sho_S}p^{-1}_X(\sho_S/s\sho_S), \Db_{\XS/S})\\
&\simeq \Rhom_{\DXS}(q_X^{*}\shm', \Rhom_{p^{-1}_X\sho_S}(p^{-1}_X(\sho_S/s\sho_S), \Db_{\XS/S}))\\
&\simeq \Rhom_{\DXS}(q_X^{*}\shm', \Db_{\XS/S}/s\Db_{\XS/S})[-1]\\
&\simeq\Rhom_{\DXS/s\DXS}(q_X^{*}\shm'/sq_X^{*}\shm', \Db_{\XS/S}/s\Db_{\XS/S})[-1]\\
&\simeq \Rhom_{\shd_X}(\shm', \Db_{X})[-1],
\end{align*}
which has regular holonomic cohomology over $\shd_{\bar{X}}$ concentrated in degree 1 according to \cite [Th.\,2]{Kashiwara86}.
\end{proof}
\begin{proof}[Proof of \eqref{enum:i}]
We consider the analogue of the statement $P$ for right $\DXS$-modules:\vspace*{-3pt}
\[
\tag*{$P_X(\shn)$:}\shn\otimes^L_{\DXS} \Db_{\XS/S}\in \rD^\rb_{\rhol}(\shd_{\XbS/S}).
\]
We first remark that, by biduality, \eqref{enum:i} is equivalent to the following condition, working with right $\DXS$-modules $\shn$:
\begin{enumerate}\renewcommand{\theenumi}{\roman{enumi}$'$}
\item\label{enum:i'}
For all $\shn\in\rD^\rb_{\rhol}(\DXS)$, $P_X(\shn)$ holds true.\end{enumerate}
In what follows we will prove \eqref{enum:i'}. We argue in a way similar to that of \hbox{\cite[End of the proof of Th.\,3]{FMFS19}}, and we only emphasize the modifications of the argument.

\subsubsection*{Step 1. Proof of \eqref{enum:i'} in the torsion case}
This follows from Lemma \ref{lem:torsion}.

\subsubsection*{Step 2}
In a way identical to that used in \cite[End of the proof of Th.\,3]{FMFS19}, we reduce to proving $P_X(\shn)$ for those regular holonomic $\DXS$-modules $\shn$ which are strict and satisfy
\begin{enumerate}
\item
the support $Z\subset X$ of $\shn$ is pure dimensional of dimension $k$,
\item
there exists a closed hypersurface $Y\subset X$ such that
\begin{itemize}
\item
$Y\cap Z$ contains the singular locus of $Z$,
\item
$\shn\simeq\shn(*(Y\times S))$.
\end{itemize}
\end{enumerate}

\subsubsection*{Step 3. Reduction to the case of D-type}
Let us recall the definition of D-type (\cf \cite[Def.\,2.10]{MFCS2}).

\begin{definition}\label{def:Dtype}
Let $Y$ be a normal crossing hypersurface in $X$. We recall that a regular holonomic $\DXS$-module $\shn$ is said to be of D-type along $Y$ if it is strict, if its relative characteristic variety is contained in $(T^*X_{|Y}\cup T^*_XX)\times S$ and if $\shn=\shn(*(Y\times S))$. Equivalently (\cf\cite[Prop.\,2.11]{MFCS2}), there exists a locally free $p_{X\moins Y}^{-1}\sho_S$-module $F$ such that the left $\DXS$-module associated with~$\shn$ is equal to the extension by moderate growth of $F\otimes_{\pOS}\sho_{(X\moins Y)\times S}$.
\end{definition}

In the situation of Step 2, there exists a commutative diagram
\[
\begin{array}{c}
\xymatrix{
& X'\setminus Y'
\ar@{^{ (}->}[r]^(.65){j'} \ar[ld]_{\pi_{Z^*}}
&
X'\ar[d]^{\pi} \\
{Z^*}
\ar@{^{ (}->}[r]^-{i} &X\moins Y \ar@{^{ (}->}[r]^-{j} & X
}
\end{array}
\]
where $\pi$ is a projective morphism, $X'$ is smooth, $Y'$ is a normal crossing hypersurface, $Z^*:=Z\setminus Y$ and $\pi_Z^*$ is biholomorphic. Let
\[
\Dpi^*\shn=\shn\otimes^L_{\DXS}\shd_{(X\leftarrow X')/S}
\]
be the pullback of $\shn$. Then $\shn':=\Dpi^*\shn[d_{X'}-d_X]$ is concentrated in degree zero and is of D-type along $Y'$ and we have $\shn\simeq\Dpi_*\shn'$. We assume that $P_{X'}(\shn')$ holds and we prove that $P_X(\shn)$ holds.

According to \cite[Cor.\,2.4]{MFCS2}, the pushforward $\Dpi_*(\shn'\otimes^L_{\shd_{\XpS/S}}\Db_{\XpS/S})$ is an object of $\rD^\rb_\rhol(\DXbS)$. Then we can argue as in \cite[p.\,206]{Kashiwara86} to conclude that $P_X(\shn)$ holds.

\subsubsection*{Step 4. Proof of \eqref{enum:i'} in the case of D-type}

Let $Y$ be a normal crossing divisor in $X$ and assume that $\shn$ is a right $\DXS$-module of D-type along~$Y$. In particular, $\shn$ is strict and $\shn\simeq \shn(*(Y\times S))$. We are thus reduced to proving that
\[
\shn\overset{L}{\otimes}_{\DXS}\Db_{\XS/S}(*(Y\times S))
\]
has regular holonomic cohomology as a $\DXbS$-module. The main point is to prove coherence and holonomicity.

Since the proof is local, we may assume that, in a neighbourhood of $(x_0, s_o)\in \XS$, $S=\C$ with a coordinate $s$ vanishing at $s_o$, $X=\C^d$ with coordinates $(x_1,\dots, x_d)$ vanishing at $x_0$, $Y=\{f(x)=0\}$ with $f=x_1\cdots x_\ell$ and, by strictness (\cf\cite[Prop.\,2.11 \& proof of Cor.\,2.8]{MFCS2}), after a convenient ramification $\rho:S'\to S$ at $s_o$ of finite order, the pullback $\rho^*\shn$
\begin{enumerate}
\item\label{assumption:i}
has a finite filtration whose successive quotients take the form $\shd_{\XS'/S'}/(P_1,\dots,P_d)$, with
\[
P_1=x_1\partial_1-\alpha_1(s),\dots, P_\ell=x_\ell\partial_\ell-\alpha_\ell(s),\ P_{\ell+1}=\partial_{\ell+1},\dots, P_d=\partial_d,
\]
for some holomorphic functions on $S'$ such that $\alpha_j(s_o)\notin\Z_{-}$ (since~$\shn$ is localized along $Y\times S$ as a right $\DXS$-module).
\end{enumerate}
We prove the statement for $\shn$ assuming it for $\rho^*\shn$. Let us note that
\[
\rho^*(\shn\otimes^L_{\DXS}\Db_{\XS/S})\simeq \rho^*\shn\otimes^L_{\shd_{\XS'/S'}}\Db_{\XS'/S'}
\]
functorially on $\shn$. In fact, this holds for any $\DXS$-coherent module $\shn$: to see that, we can locally reduce to the case were $\shn=\DXS$, that is, to prove that $\rho^*\Db_{\XS/S}=\Db_{\XS'/S'}$, which is true by Corollary~\ref{Cram}. Since $\shn\simeq\shn(*(Y\times S))$, we also have
\begin{equation}\label{E9}
\rho^*\shn\otimes^L_{\shd_{\XS'/S'}}\Db_{\XS'/S'}\simeq\rho^*\shn\otimes^L_{\shd_{\XS'/S'}}\Db_{\XS'/S'}(*(Y\times S')).
\end{equation}

Since $\shn=\shn(*(Y\times S))$, the localized pushforward (in the sense of $\shd$\nobreakdash-modules) $(\Drho_*\rho^*\shn)(*(Y\times S))$ is equal to the sheaf pushforward $\rho_*\rho^*\shn$, and by Corollary \ref{Cram} and the projection formula, we have functorial isomorphisms of $\DXbS(*(\ov Y\times S))$-modules
\begin{equation}\label{E10}
\begin{split}
\Drho_*(\rho^*\shn\otimes^L_{\shd_{\XS'/S'}}&\Db_{\XS'/S'}(* Y\times S'))\\
&\simeq\rho_*(\rho^*\shn\otimes^L_{\shd_{\XS'/S'}}\Db_{\XS'/S'}(* Y\times S'))\\
&\simeq \rho_*\rho^*\shn\otimes^L_{\DXS}\Db_{\XS/S}(* Y\times S).
\end{split}
\end{equation}
Hence $\shn\otimes_{\DXS}^L\Db_{\XS/S}$, being a direct summand of the holonomic $\DXbS$-module $\rho_*\rho^*\shn\otimes_{\DXS}\Db_{\XS/S}(*(Y\times S))$, is holonomic.

To check the regularity of $\shn\otimes_{\DXS}^L\Db_{\XS/S}$, we apply $Li^*_{s_o}$ for each \hbox{$s_o\in S$} and recall (\cite[Prop.\,2.1]{MFCS1}) that the result is functorially isomorphic~to
\[
Li^*_s\shn\otimes^L_{\shd_X} Li^ *_s\Db_{\XS/S}
\]
hence, according to Proposition \ref{memento} and to \cite[Th.\,1]{Kashiwara86}, $\shn\otimes_{\DXS}^L\Db_{\XS/S}$ is regular holonomic.

It remains therefore to prove the statement for $\shn$ assuming that it satisfies~\eqref{assumption:i}. It is then enough to prove the statement for each successive quotient of the corresponding filtrations, so we assume that $\shn$ is one such quotient. In such a case,~$\shn$ has a resolution by free $\DXSp(*(Y\times S'))$-modules obtained by taking the simple complex associated with the $d$-cube complex having vertices equal to $\DXSp(*(Y\times S'))$ and arrows in the $i$th direction equal to $P_i$. It follows that the later complex is isomorphic to the simple complex associated with the $d$-cube complex having vertices equal to $\Db_{\XS'/S'}(*(Y\times S'))$ and arrows in the $i$th direction equal to the left action of $P_i$ on $\Db_{\XS'/S'}(*(Y\times S'))$. Note also that $\Db_{\XS'/S'}(*(Y\times S'))=\Db_{\XS'/S'}(*(\ov Y\times S'))$.

The function $(x,s)\mto|x|^{2\alpha(s)}:=|x_1|^{2\alpha_1(s)}|x_2|^{2\alpha_2(s)}\cdots|x_\ell|^{2\alpha_\ell(s)}$ is real analytic on $(X\setminus Y)\times S'$, holomorphic in $s$, and defines, by multiplication, an $(\sho_{\XS'},\sho_{\XbS'})$-linear isomorphism from $\Db_{\XS'/S'}(*(Y\times S'))$ to itself. \hbox{Regarding} $P_i$ as $|x|^{2\alpha_i(s)}\circ(x_i\partial_i)\circ|x|^{-2\alpha_i(s)}$ for $i=1,\dots,\ell$, \eqref{E9} is isomorphic to the simple complex associated with the $d$-cube complex having vertices equal to $\Db_{\XS'/S'}(*(Y\times S'))$ and arrows in the $i$th direction equal to the left action of~$Q_i$ on $\Db_{\XS'/S'}(*(Y\times S'))$, with $Q_1=x_1\partial_1,\dots, Q_\ell=x_\ell\partial_\ell,\, Q_{\ell+1}=\partial_{\ell+1},\dots, Q_d=\partial_d$.

By the relative Dolbeault-Grothendieck lemma, the latter complex has cohomology in degree zero only. It follows that so does \eqref{E9}, and its zeroth cohomology is isomorphic to $\sho_{\XbS'}(*(\ov{Y}\times S'))$ endowed with its usual connection twisted by $|x|^{-2\alpha(s)}$. In other words, it is of D-type on $(\ov X,\ov Y)\times S$ and, is thus holonomic.
\end{proof}

\section{Some examples}\label{sec:3}
\begin{proposition}\label{DT}
Let us assume that $d_X>0$. Any regular holonomic $\DXS$-module $\shm$ of D-type along a normal crossing divisor $Y\subset X$ (Definition \ref{def:Dtype}) locally admits an injective $\DXS$-linear morphism taking values in $\Db_{\XS/S}$. Furthermore,
$\Conj_{X,\ov X}^S(\shm)$ and $\bD\shm$ are locally cyclic up to $S$\nobreakdash-torsion.
\end{proposition}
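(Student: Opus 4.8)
The plan is to exhibit, locally, an explicit relative distribution that realizes $\shm$ inside $\Db_{\XS/S}$, and then to read off both cyclicity statements from Proposition \ref{P:RHD1} and from the stability of D-type modules under duality. First I would localize near a point $(x_0,s_o)\in\XS$ and put $\shm$ in the normal form of Step 4 of the proof of the main theorem: after a finite ramification $\rho\colon S'\to S$ (harmless by Corollary \ref{Cram}) and a choice of coordinates with $Y=\{x_1\cdots x_\ell=0\}$, the module $\shm$ is a regular meromorphic connection whose residues along the components of $Y$ are holomorphic functions $\alpha_i(s)$ with $\alpha_i(s_o)\notin\Z_-$. The natural candidate is the relative distribution
\[
u:=|x_1|^{2\alpha_1(s)}\cdots|x_\ell|^{2\alpha_\ell(s)},
\]
to be corrected by factors $(\log|x_i|^2)^{k}$ in the presence of Jordan blocks and by monomial shifts in the $\ov x_i$ when several blocks share a residue, so as to match the flat sections of $\shm$. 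The first technical point is that $u$ is a genuine section of $\Db_{\XS/S}$ near $(x_0,s_o)$, holomorphic in $s$ and of moderate growth along $Y$: the analytic continuation of $|x_i|^{2\lambda}$ has poles only at $\lambda\in\Z_{<0}$, which are avoided on a small disc $W$ around $s_o$ since $\alpha_i(s_o)\notin\Z_-$.

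Next I would compute the annihilator. A direct calculation gives $(x_i\partial_i-\alpha_i(s))u=0$ for $i\le\ell$ and $\partial_j u=0$ for $j>\ell$, so the defining ideal of $\shm$ annihilates $u$ and we obtain a surjective $\DXS$-linear morphism $u^*\colon\shm\twoheadrightarrow\shd_{\XS/S}u\subset\Db_{\XS/S}$. To see that $u^*$ is injective it suffices to check that it is an isomorphism, and since both sides are regular meromorphic connections of the same rank on $(X\moins Y)\times S$ — the correction terms being chosen linearly independent — the morphism is generically an isomorphism; as $\shm=\shm(*(Y\times S))$ has no nonzero subobject supported on $Y\times S$, its kernel (which is supported on $Y\times S$) vanishes, whence $u^*$ is injective. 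This produces the desired injective morphism $\shm\hookrightarrow\Db_{\XS/S}$ (and incidentally shows that $\shm$ is itself locally cyclic), after descending from $S'$ to $S$ through $\rho$ as in Step 4; the absolute prototype is the construction of $|x|^{2\alpha}$ in \cite{Kashiwara86}.

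Finally I would deduce the two cyclicity assertions. Since $\shm$ is strict, $\shh^0\Conj_{X,\ov X}^S(\shm)=\ho_{\DXS}(\shm,\Db_{\XS/S})$ and the higher cohomology vanishes by Theorem \ref{th:main}\eqref{th:main2}, so $u^*$ is a local section of $\Conj_{X,\ov X}^S(\shm)$; being injective, it generates $\Conj_{X,\ov X}^S(\shm)$ over $\DXbS$ by Proposition \ref{P:RHD1} (equivalently $\DXbS u\simeq\Conj_{X,\ov X}^S(\shm)$ by Corollary \ref{PRHD}), so $\Conj_{X,\ov X}^S(\shm)$ is locally cyclic. For $\bD\shm$ I would use that the holonomic dual of a regular meromorphic connection localized along $Y$ is again such a connection (with residues $\alpha_i\mapsto-1-\alpha_i$, renormalized modulo $\Z$), concentrated in degree zero; hence $\bD\shm$ is once more of D-type along $Y$ in the sense of Definition \ref{def:Dtype}, and applying the construction of the first two paragraphs to $\bD\shm$ yields $\bD\shm\simeq\shd_{\XS/S}v$ for a suitable relative distribution $v$, so $\bD\shm$ is locally cyclic as well.

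I expect the main obstacle to be the relative nature of the distribution $u$: controlling its holomorphic dependence on $s$ across $s_o$ — a difficulty absent from Kashiwara's absolute setting, where one works over a point — and identifying its annihilator in the non-semisimple and several-components situation, that is, checking that the logarithmic and $\ov x$-shift corrections reproduce exactly the defining ideal of $\shm$ after the ramification $\rho$.
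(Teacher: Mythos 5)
Your outer strategy (produce a local injective $\DXS$-morphism $\shm\to\Db_{\XS/S}$, then invoke Proposition \ref{P:RHD1}) agrees with the paper's, but two of your steps fail. The clearest error concerns $\bD\shm$: the holonomic dual of a module of D-type along $Y$ is \emph{not} again of D-type, because Definition \ref{def:Dtype} requires $\shn\simeq\shn(*(Y\times S))$ and duality does not preserve localization. On the solution side, D-type is characterized by $\pSol\shm\simeq j_!F[d_X]$, while $\pSol$ commutes with duality, so $\pSol(\bD\shm)\simeq j_*(\bD F)[d_X]$, which is not of the form $j_!(\cdot)[d_X]$ once $Y$ is nonempty (already in the absolute case $\bD(\sho_X(*Y))$ is not localized along $Y$). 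Hence you may not feed $\bD\shm$ back into the first statement, and the residue recipe $\alpha_i\mapsto-1-\alpha_i$ does not rescue this. The paper argues differently: by \eqref{eq:ConjXSconjSol}, the module that \emph{is} of D-type on $\ov X$ is $\conj_{X,\ov X}^S(\shm)$; applying the first statement and Proposition \ref{P:RHD1} to it on $\ov X$ shows that $\Conj_{\ov X,X}^S(\conj_{X,\ov X}^S(\shm))$ is locally cyclic, and this module is $\bD\shm$ by the involution \eqref{eq:ConjXSinvol}.

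The construction of the embedding itself also has two gaps. First, the normal form of Step 4 provides, after ramification, only a \emph{filtration} with cyclic quotients $\shd_{\XS'/S'}/(x_i\partial_i-\alpha_i(s),\partial_j)$; your distribution $\prod_i|x_i|^{2\alpha_i(s)}$ (with logarithmic corrections) treats each graded piece, but an embedding of an iterated extension does not follow from embeddings of its graded pieces --- this is exactly the non-semisimple difficulty you flag as unresolved at the end, so it is a genuine gap, not a technicality. Second, the descent from $S'$ to $S$ is not ``as in Step 4'': there, descent is a direct-summand argument proving regular holonomicity of a complex, and it does not descend an injective morphism into a \emph{single} copy of $\Db_{\XS/S}$. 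From $\rho^*\shm\hto\Db_{\XS'/S'}$ you only get, via Corollary \ref{Cram}, an embedding $\shm\hto\rho_*\Db_{\XS'/S'}\simeq\Db_{\XS/S}^{\oplus k}$, and extracting from this an injection into one copy is essentially the embeddability statement being proved. The paper's proof avoids both issues at once: it uses no ramification and no hypothesis $\alpha_i(s_o)\notin\Z_-$. It writes $\shm$ as the free module $\sho_{\XS}(*(Y\times S))^k$ with logarithmic connection matrices $A_i(s)$, first embeds the \emph{untwisted} localized module by $e_j\mto\ov x_1^{\,j}/f$ (the antiholomorphic shifts $\ov x_1^{\,j}$ make the images independent over $\sho_{(X\moins Y)\times S}$), and then twists by the matrix $\prod_i\ov x_i^{N_i}|x_i|^{2A_i(s)}$: the factor $|x_i|^{2A_i(s)}$ intertwines the trivial connection with $\nabla$ and absorbs arbitrary Jordan structure and resonances, while the antiholomorphic factor $\ov x_i^{N_i}$, which is transparent to $\DXS$-linearity, restores local integrability, so that the images are genuine sections of $\Db_{\XS/S}$ defined over all of $S$.
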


\begin{proof}
In the proof of Lemma 4.2 of \cite{MFCS2} it is shown that a regular holonomic $\DXS$-module $\shm$ is of D-type along $Y$ if and only if $\pSol\shm$ takes the form $j_!F[d_ X]$ for some $S$-locally constant sheaf of locally free $\pOS$-modules $F$ on $X\moins Y$, where $j:X\moins Y\hto X$ denotes the inclusion, and then $\shm\simeq\RH^S(j_!F[d_X])$, according to the Riemann-Hilbert correspondence of \loccit Then, by Theorem \ref{th:main}\eqref{eq:ConjXSconjSol}, $\conj_{X,\ov X}^S(\shm)$ is also of D-type on~$\XbS$. Therefore, if the first statement of the proposition is proved, we conclude from Proposition~\ref{P:RHD1} that $\Conj_{X,\ov X}^S(\shm)$ as well as $\Conj_{\ov X,X}^S(\conj_{X,\ov X}^S(\shm))\simeq\bD\shm$ are locally cyclic up to $S$\nobreakdash-torsion, according to Theorem \ref{th:main}\eqref{eq:ConjXSinvol}.

Let us prove the first statement. Since it is local on $X$ and $S$, we can choose local coordinates $x_1,\dots,x_d$ on $X$ such that $Y=\{x_1\cdots x_\ell=0\}$ and, according to \cite[Prop.\,2.11]{MFCS2} and the first part of the proof of Theorem 2.6 in \loccit, we can assume that $\shm$ is the free $\sho_{\XS}(*(Y\times S))$-module $\sho_{\XS}(*(Y\times S))^k$ endowed with the relative connection $\nabla$ with matrix $\sum_{i=1}^\ell A_i(s)\rd x_i$ in the canonical $\sho_{\XS}(*(Y\times S))$-basis $(e_1,\dots,e_k)$ of $\sho_{\XS}(*(Y\times S))^k$ (where $A_i(s)$ are matrices depending holomorphically on~$s$).
\subsubsection*{Step 1}
We first embed $\sho_{\XS}(*(Y\times S))^k$ with its standard $\DXS$-module structure in $\Db_{\XS/S}$. Let us set $f(x)\!=\!x_1\cdots x_\ell$ and $u_j\!=\!\ov x_1^j/f$ (\hbox{$j\!=\!1,\dots,k$}). Then $u_j$ is a section of $\Db_{\XS/S}$ and $1/f\mto u_j$ induces an isomorphism $\sho_{\XS}(*(Y\times S))\simeq\DXS\cdot u_j$. The surjective $\DXS$-linear morphism\vspace*{-5pt}
\begin{equation}\label{eq:embedO}
\begin{split}
\sho_{\XS}(*(Y\times S))^k&\to\sum_{j=1}^k\DXS u_j\subset\Db_{\XS/S}\\[-5pt]
e_j&\mto u_j
\end{split}
\end{equation}
is an isomorphism: indeed, it suffices to show that its restriction to \hbox{$(X\moins Y)\times S$} is injective. The target of the latter is equal to $\sum_j\sho_{(X\moins Y)\times S}u_j$; if $\sum_jg_ju_j=\nobreak0$ is a relation with $g_j$ holomorphic, then applying successively $\partial_{\ov x_1}^k,\partial_{\ov x_1}^{k-1},\dots$ to this relation gives successively $g_k=0$, $g_{k-1}=0,\dots$
\subsubsection*{Step 2}
Locally on $\XS$ there exists $N_i>0$ such that each entry of the matrix $|x_i|^{2A_i(s)}$ becomes locally bounded after being multiplied by $\ov x_i^{N_i}$. We~then twist the $\sho_{\XS}(*(Y\times S))$-basis $(u_1,\dots,u_k)$ by setting
\[
(v_1,\dots,v_k)=(u_1,\dots,u_k)\cdot\prod_{i=1}^\ell\ov x_i^{N_i}|x_i|^{2A_i}.
\]
The $\sho_{\XS}$-linear morphism
\begin{align*}
\sho_{\XS}(*(Y\times S))^k&\to\Db_{\XS/S}\\
e_j&\mto v_j
\end{align*}
is thus $\DXS$-linear when using the $\DXS$-structure induced by $\nabla$ on the left-hand side. Since it is obtained by applying to \eqref{eq:embedO} a matrix which is invertible on $(X\moins Y)\times S$, it is also injective.
\end{proof}

\begin{example}\label{f}
We give an example, in general not of D-type, for which both~$\shm$ and $\Conj^S_{X,\ov X}(\shm)$ are strict, regular and locally cyclic up to $S$\nobreakdash-torsion.

Let $X$ be a relatively compact open subset of $\CC^d$ and let $u$ be a regular holonomic distribution in the sense of \cite{Kashiwara86} on some open neighbourhood of the closure of $X$. Let $f$ be a holomorphic function on $X$. Let $M=\shd_X[s]\cdot \text{``}f^s u\text{''}$ be the $\shd_X[s]$-module introduced in \cite[2.2]{Kashiwara78}. Recall that $M$ is the quotient of $\shd_X[s]$ by the left ideal $\shj$ consisting of the operators $P(s)$ such that the operator $f^{\ord(P)-s}P(s)f^s\in\shd_X[s]$ satisfies $(f^{\ord(P)-s}P(s)f^s)u=0$. Let $\shm:=\DXC\cdot \text{``}f^s u\text{''}$ be the analytification of $M$, \ie $\shm=\sho_{\XC}\otimes_{\sho_X[s]}M$. Then $\shm$ is holonomic (\cf \cite[Prop.\,13]{Maisonobe16}), strict and regular since $\shd_X\cdot u$ is regular.

Let $p$ denote the finite order of $u$ on the closure of $X$. Then $|f|^{2s}u$ is a relative distribution on $X\times \{s\in\C\mid\reel s>p\}$. By using a Bernstein relation for $\text{``}f^s u\text{''}$, one finds by a standard procedure a finite set $A\subset\CC$ (the roots of a Bernstein equation for $\text{``}f^s u\text{''}$ and $\text{``}f^s \ov u\text{''}$) such that $|f|^{2s}u$ extends as a relative distribution $v$ on $\XS$, with $S:=\CC\moins(A-\NN)$.

Let $P(s)$ be a differential operator in $\shj$ of order $\ord(P)$. Assume that $\reel s>\max\{p,\ord(P)\}$. Then, since $P(s)$ commutes with $\ov f^s$, we have
\[
P(s)v=P(s)|f|^{2s}u=\ov f^s\cdot f^{s-\ord(P)}(f^{\ord(P)-s}P(s)f^s)u=0.
\]
This identity extends to $s\in S$, according to \eqref{enum:obviousc} in the appendix. There is thus a well-defined $\DXS$-linear injective morphism $\phi:\shm_{|\XS}\hto\Db_{\XS/S}$:
\begin{align*}
\phi:\shm_{|\XS}&\isom\DXS\cdot v\subset\Db_{\XS/S}\\
\text{``}f^s u\text{''}&\mto v.
\end{align*}
Hence $\phi$ is a generator of $\Conj^S_{X,\ov X}(\shm)_{|\XS}$ up to $S$\nobreakdash-torsion. From Corollary~\ref{PRHD} we also deduce an isomorphism up to $S$\nobreakdash-torsion:
\[
\Conj^S_{X,\ov X}(\shm_{|\XS})\simeq\shd_{\XbS}\cdot v\simeq\shd_{\XbS}\cdot\text{``}\ov f^s u\text{''}.
\]

If $\alpha:S'\to\CC$ is any holomorphic function on a Riemann surface $S'$, then the pullback $\alpha^*\shm=\DXSp\cdot\text{``}f^{\alpha(s')} u\text{''}$ with respect to $S$ is regular holonomic and $\Conj_{X,\ov X}^{S'}\shm=\alpha^*\Conj_{X,\ov X}^S\shm$. As a consequence, still denoting $S':=\alpha^{-1}(S)$, we find an isomorphism up to $S$\nobreakdash-torsion
\[
\Conj^{S'}_{X,\ov X}(\DXSp\cdot\text{``}f^{\alpha(s')} u\text{''})\simeq\shd_{\XbS'/S'}\cdot\text{``}\ov f^{\alpha(s')} u\text{''}.
\]
For example, consider the case where $X\subset\CC$ with coordinate $z$, $f(z)=z$ and $u=\ov z^m$ for some $m\geq0$, so that $\ord(u)=0$. Then we set $S=\CC\moins\NN^*$, $S'=\alpha^{-1}(S)$ and~$v$ is the extension to $S'$ of $|z|^{2\alpha(s')}\ov z^m$ defined for $\reel\alpha(s')+m/2>-1$. Then $v$ is a global section of the sheaf $\Db_{\XS'/S'}$ satisfying $(z\partial z-\nobreak\alpha(s'))v=0$, so that $\DXSp/(z\partial z-\alpha(s'))\simeq\DXSp\cdot v$, and we have
\[
\Conj_{X,\ov X}^{S'}\bigl[\DXSp/(z\partial z-\alpha(s'))\bigr]\simeq\shd_{\XbS'/S'}/(\ov z\partial\ov z-\alpha(s')-m).
\]
\end{example}

\appendix
\section*{Appendix. The sheaf of relative distributions}
\refstepcounter{section}
\renewcommand{\thetheorem}{\Alph{section}.\arabic{theorem}}
\renewcommand{\theequation}{\Alph{section}.\arabic{equation}}
\setcounter{theorem}{0}
\setcounter{equation}{0}

In this appendix, we give details on the sheaf of partially holomorphic distributions $\Db_{\XS/S}$. This sheaf was already considered by Schapira and Schneiders in \cite{SS1} (\cf also \cite{Sabbah05,Mochizuki07}). The notion of relative current of maximal degree leads to the sheaf $\Cb_{M\times T/T}$ as mentioned in the introduction.

\subsection{Partially holomorphic distributions}
Let $M$ be a $C^\infty$ manifold of real dimension $m$ and let $T$ be a complex manifold (we also denote by~$T$ the underlying $C^\infty$ manifold). Let $p:M\times T\to T$ denote the projection. Let $W$ be an open set of $M\times T$. We say that a distribution $u\in\Db(W)$ is partially holomorphic with respect to~$T$, or $T$-holomorphic, if it satisfies the partial Cauchy-Riemann equation $\ov\partial_Tu=0$. The subsheaf of $\Db_{M\times T}$ consisting of $T$-holomorphic distributions is denoted by $\Db_{M\times T/T}$. For example, if $u\in\Db(M)$, then $u$ defines a section of $\Db_{M\times T/T}(M\times T)$. The following result is obtained by adapting the proof of \cite[Th.\,4.4.7]{Hormander03}.

\begin{proposition}\label{prop:horm}
Let $U$ be an open subset of $\RR^m$, $K$ a compact subset of~$T$ and let $u$ be a distribution defined on a neighbourhood of $U\times K$ in $U\times T$. If $\ov\partial_T u=0$, then there exists a neighbourhood $V$ of $K$ and finite family of functions $f_\alpha\in C^0(U\times V)$ which are $V$-holomorphic and a decomposition $u=\sum_\alpha\partial_x^\alpha f_\alpha$.
\end{proposition}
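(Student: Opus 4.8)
The plan is to adapt the classical structure theorem (\cite[Th.\,4.4.7]{Hormander03}) by regularizing \emph{only in the real variables $x$}, so that holomorphy in $T$ is preserved automatically. The assertion is local on $U$ and on $T$, so I would first fix a holomorphic chart identifying a neighbourhood of $K$ with an open subset of $\CC^n$, choose a relatively compact neighbourhood $V$ of $K$ with $U\times\ov V$ contained in the domain of $u$, and fix a compact set $L\subset U$; it suffices to produce the decomposition on a neighbourhood of $L\times V$. On a compact neighbourhood of $L\times\ov V$ the distribution $u$ has a finite order $N$ (finite order on compact sets), and this bounds in particular the order of $u$ in the $x$-directions uniformly.

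The one new ingredient, compared with Hörmander, is to use the iterated Laplacian in the real directions alone, $\Delta=\sum_{j=1}^m\partial_{x_j}^2$, together with a fundamental solution $E_k$ of $\Delta^k$ on $\RR^m$ (a locally integrable function of $x$ only, globally of class $C^{2k-m-1}$ once $2k>m$, with model $|x|^{2k-m}$). The decisive observation is that $\Delta$ and the partial convolution $*_x$ in the $x$-variable are operators acting in $x$ alone, hence \emph{commute with $\ov\partial_T$}. Consequently, any object obtained from $u$ by convolving in $x$ with a function of $x$ is again annihilated by $\ov\partial_T$, i.e.\ remains $V$-holomorphic; this is exactly what will force the $f_\alpha$ to be $V$-holomorphic.

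Concretely, I would pick $\psi\in C_0^\infty(\RR^m)$ with $\psi\equiv1$ near $0$ and small support, and set $g:=(\psi E_k)*_x u$, the convolution being taken in $x$ with $t$ as a parameter. Writing $\Delta^k(\psi E_k)=\delta_0+\omega$ with $\omega\in C_0^\infty(\RR^m)$ supported away from the origin, one gets on the region reached by the small support of $\psi$
\[
u=\Delta^k g-\omega*_x u.
\]
Choosing $k$ with $2k\geq m+N+1$ makes $\psi E_k$ of class $C^N$, so that partial convolution of the order-$N$ distribution $u$ against it yields a function that is jointly continuous in $(x,t)$; likewise $\omega*_x u$ is smooth in $x$, and both are $V$-holomorphic by the commutation above. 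Expanding $\Delta^k=\sum_{|\alpha|\leq 2k}c_\alpha\partial_x^\alpha$ and absorbing $-\omega*_x u$ as the $\alpha=0$ term then produces, on a neighbourhood of $L\times V$, a finite decomposition $u=\sum_\alpha\partial_x^\alpha f_\alpha$ with every $f_\alpha\in C^0$ and $V$-holomorphic.

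The main obstacle I anticipate is the rigorous bookkeeping of the parametrized convolution $*_x$: one must check that convolving $u$, which carries the holomorphic parameter $t$, against $\psi E_k$ gives a function that is \emph{jointly} continuous in $(x,t)$ and genuinely holomorphic in $t$. The cleanest route is to regard $u$ as a holomorphic map from $V$ into the space of order-$\leq N$ distributions in $x$, perform the convolution in that vector-valued setting, and use the uniform order bound over $\ov V$ to upgrade the $x$-continuity estimates to joint continuity; the holomorphy in $t$ of the result is then precisely the identity $\ov\partial_T(\psi E_k*_x u)=\psi E_k*_x(\ov\partial_T u)=0$ together with the fact that a continuous, weakly $\ov\partial_T$-closed function is $V$-holomorphic.
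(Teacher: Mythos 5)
Your proof is correct and is exactly what the paper intends: the paper's entire proof is the remark that one adapts H\"ormander's Th.\,4.4.7, and the adaptation is precisely your observation that convolution taken in the $x$-variables alone (with $\psi E_k$, writing $\Delta^k(\psi E_k)=\delta_0+\omega$) commutes with $\ov\partial_T$, so the classical structure-theorem argument outputs continuous coefficients that are $V$-holomorphic. One caveat, which concerns the paper's statement rather than your argument: a finite decomposition with $f_\alpha\in C^0(U\times V)$ forces $u$ to have bounded order on all of $U\times V$, so the conclusion can really only hold on relatively compact subsets of $U$ (your reduction to $L\times V$), which is exactly the local form needed for Corollary \ref{cor:expansion}.
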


Let us mention some obvious consequences.
\begin{enumerate}
\item\label{enum:obviousa}
A partially holomorphic distribution $u\in\Db_{M\times T/T}(M\times T)$ extends as a continuous linear map from the space of $C^\infty$ functions on $M\times T$ which are $T$-holomorphic and have $p$-proper support to the space of holomorphic functions $\sho(T)$, endowed with the usual family of semi-norms (sup of absolute value on compact subsets of $T$).
\item\label{enum:obviousb}
As a consequence, a $T$-holomorphic distribution on $M\times T$ can be restricted to each $t_o\in T$, giving rise to a distribution $u_{|t_o}\in\Db(M)$. More generally, for any holomorphic function $g:T'\to T$, the pullback $g^*u$ is defined as a $T'$-holomorphic distribution on $M\times T'$.
\item\label{enum:obviousc}
If $T$ is connected and if two $T$-holomorphic distributions on $M\times T$ coincide on $M\times V$ for some nonempty open subset $V$ of $T$, they coincide: this follows from the first point above.
\item\label{enum:obviousd}
Let $f\in\sho(T)$ not vanishing on a dense open subset of $T$. For $u\in\Db_{M\times T/T}(M\times T)$, if $fu=0$, then $u=0$.
\end{enumerate}

\begin{corollary}\label{cor:expansion}
Assume that $T$ is an open polydisc centered at the origin with coordinates $t_1,\dots, t_n$. Let $u\!\in\!\Db_{M\times T/T}(M\times T)$. Then $u$ admits a termwise $T$-differentiable convergent expansion
\[
u=\sum_{\bmm\in\NN^n} u_{\bmm}t^{\bmm}
\]
where $u_{\bmm}$ in $\Db(M)$ (regarded in $\Db_{M\times T/T}(M\!\times\!T)$) are uniquely determined from $u$, and convergence meaning convergence in $\Db(M\times T)$.
\end{corollary}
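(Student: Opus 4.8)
The plan is to produce the coefficients by an intrinsic formula and then to get both convergence and uniqueness from the local structure of $T$-holomorphic distributions given by Proposition~\ref{prop:horm}. Since the holomorphic derivations $\partial_{t_1},\dots,\partial_{t_n}$ commute with $\ov\partial_T$, each $\partial_t^{\bmm}u$ (with $\partial_t^{\bmm}=\partial_{t_1}^{m_1}\cdots\partial_{t_n}^{m_n}$) is again a $T$-holomorphic distribution, so by \eqref{enum:obviousb} it can be restricted to $t=0$; I would \emph{define}
\[
u_{\bmm}:=\tfrac1{\bmm!}\,(\partial_t^{\bmm}u)_{|t=0}\in\Db(M),\qquad \bmm!:=m_1!\cdots m_n!.
\]
These are genuine distributions on $M$, defined globally and independently of any chart, which disposes of the gluing question at the outset.

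First I would establish the expansion $u=\sum_{\bmm}u_{\bmm}t^{\bmm}$ locally. Convergence in $\Db(M\times T)$ is tested against compactly supported functions, so it suffices to work on a product $U\times V$ of the type furnished by Proposition~\ref{prop:horm}, with $U\subset\R^m$ a coordinate chart and $V$ a neighbourhood of a compact $K\subset T$, on which $u=\sum_\alpha\partial_x^\alpha f_\alpha$ with the sum finite and $f_\alpha\in C^0(U\times V)$ being $V$-holomorphic. For fixed $x$, each $f_\alpha(x,\cbbullet)$ is an honest holomorphic function on a polydisc and hence has a Taylor expansion $f_\alpha=\sum_{\bmm}f_{\alpha,\bmm}(x)\,t^{\bmm}$ whose partial sums converge to $f_\alpha$ uniformly on compact subsets; the Cauchy integral formula shows that the $f_{\alpha,\bmm}$ are continuous on $U$. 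Uniform convergence of these locally integrable functions yields convergence in $\Db(U\times V)$, and, because each $\partial_x^\alpha$ is continuous on $\Db$ and only finitely many $\alpha$ occur, differentiating termwise gives $u=\sum_{\bmm}\bigl(\sum_\alpha\partial_x^\alpha f_{\alpha,\bmm}\bigr)t^{\bmm}$ in $\Db(U\times V)$.

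It then remains to identify the local coefficient $\sum_\alpha\partial_x^\alpha f_{\alpha,\bmm}$ with the restriction of the intrinsic $u_{\bmm}$: applying $\partial_t^{\bmm}$ to $u=\sum_\alpha\partial_x^\alpha f_\alpha$ and setting $t=0$ gives $(\partial_t^{\bmm}u)_{|t=0}=\bmm!\sum_\alpha\partial_x^\alpha f_{\alpha,\bmm}$, so the two agree. Thus the local expansions are restrictions of the single global series $\sum_{\bmm}u_{\bmm}t^{\bmm}$, which therefore converges to $u$ in $\Db(M\times T)$. Uniqueness is now formal: if $\sum_{\bmm}u_{\bmm}t^{\bmm}=0$, applying the continuous operation $\partial_t^{\bmr}$ and restricting to $t=0$ kills every term but $\bmr!\,u_{\bmr}$, so $u_{\bmr}=0$; the same continuity of $\partial_t^{\bmr}$ on $\Db$ shows the series may be differentiated term by term in the $T$-directions, which is the asserted $T$-differentiability.

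The step I expect to be the main obstacle is the passage from the classical, uniform-on-compacta convergence of the Taylor series of the continuous representatives $f_\alpha$ to honest convergence of the series in the distribution topology of $\Db(M\times T)$, together with the legitimacy of applying $\partial_x^\alpha$ term by term. Everything else---the intrinsic definition of the $u_{\bmm}$, their global nature, uniqueness, and termwise $T$-differentiability---is formal once the restriction map of \eqref{enum:obviousb} and the local structure of Proposition~\ref{prop:horm} are available.
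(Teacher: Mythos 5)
Your proposal is correct and takes essentially the same route as the paper: both proofs rest on Proposition~\ref{prop:horm}, the Taylor expansion of the continuous $V$-holomorphic functions $f_\alpha$, the passage from locally uniform convergence to convergence in $\Db$, and termwise application of the finitely many $\partial_x^\alpha$. Your one organizational difference---defining the coefficients intrinsically as $u_{\bmm}=\tfrac1{\bmm!}(\partial_t^{\bmm}u)_{|t=0}$ and then matching them to the local expansions, rather than producing the expansion locally and gluing by uniqueness over an exhaustion of $T$ by relatively compact subpolydiscs as the paper does---is a cosmetic streamlining, not a different argument.
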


\begin{proof}
We apply Proposition \ref{prop:horm} on each relatively compact open polydisc~$T'$ in $T$. Each $f_\alpha(x,t)$ is $T'$-holomorphic on $M\times T'$ and can be expanded as $\sum_{\bmm}f_{\alpha,\bmm}(x)t^{\bmm}$ with $f_{\alpha,\bmm}(x)$ continuous. The existence of the expansion follows, as well as its $T'$-differentiability. Uniqueness is then clear: for example, $u_\mathbf{0}=u_{|t=0}$. By uniqueness, applying the result to an increasing sequence of such subpolydiscs converging to $T$, we obtain the assertion.
\end{proof}

\begin{corollary}\label{memento}
For any $t_o\in T$, we have a natural identification
\[
Li^{*}_{t_o}\Db_{M\times T/T}\simeq \Db_M.
\]
\end{corollary}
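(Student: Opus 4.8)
The plan is to compute the derived restriction by means of a Koszul resolution and to extract its cohomology from the termwise expansion of Corollary~\ref{cor:expansion}. Working locally, I would take $T$ to be a polydisc with coordinates $t_1,\dots,t_n$ vanishing at $t_o$, so that the maximal ideal of the point $t_o$ is $\mathfrak m=(t_1,\dots,t_n)$. Regarding $\Db_{M\times T/T}$ as a $p^{-1}\sho_T$-module, the functor $Li^*_{t_o}$ is computed by tensoring with the Koszul resolution of $p^{-1}(\sho_T/\mathfrak m)$; thus $Li^*_{t_o}\Db_{M\times T/T}$ is represented by the Koszul complex $K^\bullet$ associated with the sequence $(t_1,\dots,t_n)$ acting by multiplication on $\Db_{M\times T/T}$, placed in degrees $-n,\dots,0$ (for $n=1$, simply the two-term complex $[\Db_{M\times T/T}\xrightarrow{\,t_1\,}\Db_{M\times T/T}]$). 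It then suffices to show that $(t_1,\dots,t_n)$ is a regular sequence on the sections of $\Db_{M\times T/T}$ and that the resulting quotient is canonically $\Db_M$; granting this, $K^\bullet$ has cohomology concentrated in degree~$0$, equal to $\Db_M$.

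Both points follow from Corollary~\ref{cor:expansion}. Writing a local section as $u=\sum_{\bmm\in\NN^n}u_{\bmm}t^{\bmm}$, multiplication by $t_1$ carries it to $\sum_{\bmm}u_{\bmm}t^{\bmm+\boldsymbol{e}_1}$, which vanishes only if every $u_{\bmm}$ does, by the uniqueness part of that corollary (equivalently, by \eqref{enum:obviousd}); hence $t_1$ is injective. Its cokernel consists of the distributions whose expansion involves no positive power of $t_1$, and, again by the expansion, these form the analogous space of distributions holomorphic in the remaining variables $t_2,\dots,t_n$, to which the same argument applies in turn for $t_2$, then $t_3$, and so on. Iterating, the sequence is regular, and the final quotient $\Db_{M\times T/T}/\mathfrak m\,\Db_{M\times T/T}$ consists of the distributions whose expansion reduces to the constant term $u_{\mathbf 0}=u_{|t=t_o}$, which by \eqref{enum:obviousb} is precisely $\Db_M$. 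The resulting isomorphism is induced by the restriction map $u\mapsto u_{|t_o}$ of \eqref{enum:obviousb}, which is intrinsic and hence independent of the chosen coordinates, giving the asserted natural identification.

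The crux of the argument, and the only place where analysis genuinely intervenes, is the injectivity of multiplication by each $t_i$ on the relevant sections, equivalently the regularity of $(t_1,\dots,t_n)$; this is exactly what the uniqueness of the termwise-convergent expansion of Corollary~\ref{cor:expansion} supplies, so no estimate beyond that corollary is required.
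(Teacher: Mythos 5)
Your proof is correct and is essentially the paper's own argument: the paper computes $Li^*_{t_o}$ by induction on $\dim T$ --- which is exactly the computation your Koszul complex packages --- reducing to the injectivity of multiplication by each coordinate (obtained, as you do, from \eqref{enum:obviousd}, i.e.\ the uniqueness underlying Corollary~\ref{cor:expansion}) and then identifying the degree-zero quotient with $\Db_M$ via the restriction map of \eqref{enum:obviousb}. The only difference is presentational (Koszul complex versus induction on the number of coordinates), and both write-ups leave implicit the same minor division step, namely that a section whose restriction to $\{t_i=0\}$ vanishes lies in $t_i\Db_{M\times T/T}$.
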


\begin{proof}
Let us start by proving that the complex $Li^{*}_{t_o}\Db_{M\times T/T}$ is concentrated in degree zero. This a local statement and by induction on $\dim T$, it is enough to check that, for any local coordinate~$t$ on~$T$, $t: \Db_{M\times T/T}\to\Db_{M\times T/T}$ is injective. Let $u$ be a section of $\Db_{M\times T/T}$ in the neighbourhood of $(x_0,0)$ such that $tu=0$. According to \eqref{enum:obviousd} above we get $u=0$ hence the vanishing of $\shh^{-1} Li^{*}_{{t=0}}\Db_{M\times T/T}$.

That $i^*_{t_o}\Db_{M\times T/T}=\Db_M$ follows from \eqref{enum:obviousb} above.
\end{proof}

Let $\rho:T'\to T$ be a finite ramification around the coordinate axes, that we write $\rho(t'_1,\dots,t'_n)=(t^{\prime k_1}_1,\dots,t^{\prime k_n}_n)$. If $u$ is $T$-holomorphic, it follows from~\eqref{enum:obviousb} above that $\rho^*u$ is $T'$\nobreakdash-holo\-morphic. Moreover, the expansion of~$\rho^*u$ is obtained from that of $u$ as
\begin{equation}\label{eq:rhostaru}
\rho^*u=\sum_{\bmm} u_{\bmm}t^{\prime k_1a_1}\cdots t^{\prime k_na_n}.
\end{equation}

\begin{corollary}\label{cor:ramif}
Under these assumption, the natural morphism
\begin{align*}
\sho(T')\otimes_{\sho(T)}\Db_{M\times T/T}(M\times T)&\to\Db_{M\times T'/T'}(M\times T')\\
h\otimes u&\mto h\cdot\rho^*u
\end{align*}
is an isomorphism.
\end{corollary}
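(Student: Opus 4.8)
The plan is to turn the statement into a bookkeeping identity between the termwise expansions furnished by Corollary~\ref{cor:expansion}, the only genuine analytic input being that convergence is preserved under the substitution $t_i=t_i^{\prime k_i}$. First I would record that $\sho(T')$ is a free $\sho(T)$-module with basis the monomials $\{t^{\prime\bmr}\mid 0\leq r_i<k_i\}$, which is the usual regrouping of a convergent power series $g(t')=\sum_{\bmr}t^{\prime\bmr}\rho^*(h_{\bmr})$ according to the residues of its exponents modulo $(k_1,\dots,k_n)$. Tensoring, the source of the morphism, call it $\Phi$, becomes $\bigoplus_{0\leq r_i<k_i}t^{\prime\bmr}\otimes\Db_{M\times T/T}(M\times T)$, and $\Phi$ sends $\sum_{\bmr}t^{\prime\bmr}\otimes u_{\bmr}$ to $\sum_{\bmr}t^{\prime\bmr}\rho^*(u_{\bmr})$.

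I would then expand. Writing $u_{\bmr}=\sum_{\bmm}u_{\bmr,\bmm}\,t^{\bmm}$ as in Corollary~\ref{cor:expansion}, formula \eqref{eq:rhostaru} gives $t^{\prime\bmr}\rho^*(u_{\bmr})=\sum_{\bmm}u_{\bmr,\bmm}\,t^{\prime\bg}$ with $\bg=\bmr+k\bmm$, meaning $g_i=r_i+k_im_i$. Since every $\bg\in\NN^n$ is uniquely of this form with $0\leq r_i<k_i$, the $T'$-expansion of $\Phi(\sum_{\bmr}t^{\prime\bmr}\otimes u_{\bmr})$ has coefficient $u_{\bmr,\bmm}$ at $t^{\prime\bg}$. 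Comparing with the unique expansion $w=\sum_{\bg}w_{\bg}t^{\prime\bg}$ of an element of the target, injectivity of $\Phi$ is immediate: if the image vanishes, uniqueness of the expansion forces all $u_{\bmr,\bmm}=0$, hence every $u_{\bmr}=0$.

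For surjectivity, given such a $w$ I would set $u_{\bmr}:=\sum_{\bmm}w_{\bmr+k\bmm}\,t^{\bmm}$ for each residue $\bmr$; granting that $u_{\bmr}$ is a genuine section of $\Db_{M\times T/T}(M\times T)$, the coefficient matching above yields $\Phi(\sum_{\bmr}t^{\prime\bmr}\otimes u_{\bmr})=w$. The hard part, and the main obstacle, is exactly this convergence claim, \ie the descent of invariants along the branched (non-\'etale) cover $\rho$, where pullback of arbitrary distributions is ill behaved along the coordinate hyperplanes. I would handle it by isolating the fixed-residue subseries through finite-group averaging: letting $G=\prod_i\mu_{k_i}$ act on $M\times T'$ by $\zeta\cdot t_i'=\zeta_i t_i'$, the operator $w\mapsto w^{(\bmr)}:=|G|^{-1}\sum_{\zeta\in G}\zeta^{-\bmr}\,\zeta^*w$ is a finite combination of pullbacks, hence raises no convergence issue by \eqref{enum:obviousb}, and extracts precisely $\sum_{\bmm}w_{\bmr+k\bmm}\,t^{\prime(\bmr+k\bmm)}$. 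Thus $w^{(\bmr)}$ is a bona fide section of $\Db_{M\times T'/T'}(M\times T')$; it vanishes to order $r_i$ along each $\{t_i'=0\}$, so it is divisible by $t^{\prime\bmr}$, the quotient being unique by \eqref{enum:obviousd} and $G$-invariant. Finally a $G$-invariant relative distribution on $M\times T'$ is $\rho^*$ of one on $M\times T$: on coefficients this is the substitution $t_i=t_i^{\prime k_i}$, and the required convergence of $\sum_{\bmm}w_{\bmr+k\bmm}\,t^{\bmm}$ in $\Db(M\times T)$ transfers from that of $w^{(\bmr)}/t^{\prime\bmr}$ in $\Db(M\times T')$ because the $i$-th polyradius of $T=\rho(T')$ is the $k_i$-th power of that of $T'$. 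Matching the resulting isotypic decomposition $\Db_{M\times T'/T'}(M\times T')=\bigoplus_{\bmr}t^{\prime\bmr}\rho^*\Db_{M\times T/T}(M\times T)$ with the free-module decomposition of the source shows that $\Phi$ is an isomorphism.
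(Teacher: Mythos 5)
Your proposal reaches the right statement by a route whose skeleton---freeness of $\sho(T')$ over $\sho(T)$ on the monomials $t^{\prime\bmr}$, expansion via Corollary~\ref{cor:expansion}, regrouping of exponents modulo $(k_1,\dots,k_n)$, injectivity from uniqueness of coefficients---is exactly the paper's; the injectivity halves coincide essentially verbatim. Where you genuinely differ is surjectivity. The paper regroups the expansion of $w$ and treats the convergence of each subseries $\sum_{\bmm}w_{\bmr+k\bmm}t^{\bmm}$ in $\Db(M\times T)$ as obvious; it is, provided one goes back to Proposition~\ref{prop:horm}: writing $w=\sum_\alpha\partial_x^\alpha f_\alpha$ with $f_\alpha$ continuous and $T'$-holomorphic, Cauchy estimates on the coefficients of the $f_\alpha$ give locally uniform convergence of each regrouped series, and applying $\partial_x^\alpha$ preserves convergence in $\Db$. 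You instead run the extraction through finite group averaging, division by $t^{\prime\bmr}$, and descent along $\rho$. This buys something real: each averaging operation is a finite sum of pullbacks by the automorphisms $t'\mapsto\zeta t'$, covered by \eqref{enum:obviousb}, so it produces an honest section of $\Db_{M\times T'/T'}$ and you never have to discuss convergence of a rearranged series with distribution coefficients.

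However, the two steps you then assert are exactly where the analytic content now sits, and as written they are not proofs. First, ``vanishes to order $r_i$, hence divisible by $t^{\prime\bmr}$'' has no a priori meaning for a distribution; what you need is that multiplication by $t_i'$ on $\Db_{M\times T'/T'}$ is injective with image the kernel of restriction to $\{t_i'=0\}$, which is the hyperplane version of the computation proving Corollary~\ref{memento} (injectivity is \eqref{enum:obviousd}, and uniqueness and $G$-invariance of the quotient are fine as you say)---available, but it must be invoked rather than replaced by an order-of-vanishing metaphor. Second, ``convergence transfers because the polyradius of $T$ is the $k_i$-th power of that of $T'$'' is a statement about numerical power series, not about series with coefficients in $\Db(M)$. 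The clean way to descend a $G$-invariant section $\tilde w$ is to bypass series altogether: set $u:=|G|^{-1}(\id_M\times\rho)_*\tilde w$, the pushforward along the finite proper map defined by pairing a test form $\phi$ with $(\rho^*\phi)\,|\det D\rho|^2$; then $u$ is $T$-holomorphic because $\rho$ is holomorphic, $\rho^*u=\tilde w$ holds off the branch locus by the covering identity $\rho^*\rho_*=\sum_{\zeta\in G}\zeta^*$ together with $G$-invariance, and hence everywhere by \eqref{enum:obviousc}. With these two repairs your argument closes, and it is arguably more robust than the paper's one-line appeal to obviousness, since it localizes all convergence questions in statements already established in the appendix.
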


\begin{proof}
Surjectivity is obvious, by using the expansion of Corollary \ref{cor:expansion} on $M\times T'$ and \eqref{eq:rhostaru}. Since $\sho(T')$ is free over $\sho(T)$ with basis $t^{\prime\bmr}$, $0\leq r_i<k_i$ ($i=1,\dots,n$), injectivity follows from uniqueness of the decomposition in Corollary \ref{cor:expansion} on $M\times T'$.
\end{proof}

By sheafifying Corollary \ref{cor:ramif} we obtain:

\begin{corollary}\label{Cram}
Let $T,T'$ be polydiscs and let $\rho:T'\to T$ be a ramification along the coordinate axes. Then
\[
\rho^*\Db_{M\times T/T}:=\sho_{T'}\otimes_{\sho_T}\Db_{M\times T/T}\simeq\Db_{M\times T'/T'}.
\]
\end{corollary}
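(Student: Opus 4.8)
The plan is to upgrade the isomorphism of global sections obtained in Corollary \ref{cor:ramif} to an isomorphism of sheaves by checking it stalkwise on $M\times T'$. First I would record that the assignment $h\otimes u\mto h\cdot\rho^*u$, where $\rho^*u$ is the pullback of a $T$-holomorphic distribution (well defined by \eqref{enum:obviousb} and compatible with restriction to smaller opens), is the stalk-level shadow of a morphism of sheaves of $p'^{-1}\sho_{T'}$-modules
\[
\Phi:\sho_{T'}\otimes_{\sho_T}\Db_{M\times T/T}\to\Db_{M\times T'/T'}
\]
on $M\times T'$, the left-hand sheaf being understood as the $\sho$-module pullback of $\Db_{M\times T/T}$ along $\id_M\times\rho$. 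It then suffices to prove that $\Phi$ is an isomorphism on the stalk at every point $(x_0,t'_0)\in M\times T'$.

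Away from the coordinate hyperplanes, \ie when all coordinates of $t'_0$ are nonzero, the power map $\rho$ is a local biholomorphism near $t'_0$, so $\id_M\times\rho$ carries the $\ov\partial_T$-equation to the $\ov\partial_{T'}$-equation and $\rho^*$ identifies $\sho_T$ with $\sho_{T'}$ near such a point; hence $\Phi$ is manifestly an isomorphism there. The content is therefore concentrated along the coordinate axes. At the origin I would use the neighbourhood basis of $(x_0,0)$ consisting of products $U\times T''$, with $U\subset M$ open and $T''$ a sub-polydisc centred at $0$ in $T'$. Then $\rho(T'')$ is again a polydisc centred at $0$ in $T$ and $\rho|_{T''}:T''\to\rho(T'')$ is a ramification of the same type, so Corollary \ref{cor:ramif} applied to $U,\ \rho(T''),\ T''$ gives an isomorphism
\[
\sho(T'')\otimes_{\sho(\rho(T''))}\Db_{U\times\rho(T'')/\rho(T'')}(U\times\rho(T''))\isom\Db_{U\times T''/T''}(U\times T'').
\]
Passing to the filtered colimit over $U$ and $T''$, using that $\otimes$ commutes with filtered colimits and that the polydiscs $\rho(T'')$ are cofinal among polydiscs about $0$ in $T$, one recovers exactly the stalk of $\Phi$ at $(x_0,0)$, which is thus an isomorphism.

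The main obstacle is the intermediate case of a point $t'_0$ lying on some but not all coordinate hyperplanes, where $\rho$ is only partially ramified and Corollary \ref{cor:ramif}, stated for polydiscs centred at the origin with full ramification, does not apply verbatim. I would dispose of this by a product decomposition: after reordering, a neighbourhood of $t'_0$ factors as $T'_a\times T'_b$, a polydisc centred at $0$ in the ramified coordinates times a polydisc centred away from $0$ in the remaining ones, so that $\rho=\rho_a\times\rho_b$ with $\rho_a$ a ramification about the origin and $\rho_b$ a biholomorphism onto its image. Handling the $\rho_b$-factor by the biholomorphism argument above reduces the claim to the $\rho_a$-factor, already treated as the origin case. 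Finally I would remark that in the application of the main text $S$ is a curve, so $T'$ is one-dimensional and this intermediate case is empty (a point is either $0$ or off the axis); the subtlety is present only in the stated polydisc generality.
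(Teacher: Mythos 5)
Your proposal is correct and is essentially the paper's own proof carried out in detail: the paper's entire argument is ``By sheafifying Corollary \ref{cor:ramif}'', and your stalkwise verification --- local biholomorphism off the coordinate hyperplanes, a filtered colimit of Corollary \ref{cor:ramif} over sub-polydiscs at the origin, and the product/recentering reduction at partially ramified points --- is precisely what that sheafification amounts to. Note only that your ``intermediate case'' needs no separate device: after recentering the unramified coordinates at $t'_0$ and allowing ramification indices equal to $1$ (which the paper's definition permits), Corollary \ref{cor:ramif} applies verbatim, so that case folds into your origin argument.
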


\subsection{Integration and pushforward of relative currents by a proper map}\label{subsec:currents}
The sheaf $\Cb_{M\times T/T}$ of $T$\nobreakdash-holo\-morphic currents of maximal degree is the subsheaf of the sheaf of degree $m$ currents which are killed by $\ov\partial_T$.

Let $f:M\to N$ be a $C^\infty$ map between $C^\infty$ manifolds. Let $W'\subset N\times B$ be an open set and let $W=f^{-1}(W')$. Let $u\in\Cb_{M\times T/T}(W)$ and assume that~$f$ is proper on $\Supp u$. Then the integral $\int_fu$, which is a current of degree $\dim N$ on $N\times T$, is also killed by $\ov \partial_T$, hence belongs to $\Cb_{N\times T/T}(W')$.

Let us now consider the sheaf-theoretic pushforward of $\Cb_{M\times B/B}$ by $f\times\id$.

\begin{proposition}\label{prop:Cacyclic}
The sheaf $\Cb_{M\times T/T}$ is $(f\times\id)_!$-acyclic.
\end{proposition}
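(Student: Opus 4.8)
The plan is to reduce the vanishing of the higher direct images with proper support to a fiberwise statement, and then to establish softness fiber by fiber, where the rigidity coming from $T$-holomorphy disappears. The first point to emphasize is that one should \emph{not} try to prove that $\Cb_{M\times T/T}$ is c-soft as a sheaf on the total space $M\times T$: it is not, since multiplication by a cut-off function in the $T$-direction destroys $T$-holomorphy, and a germ of $T$-holomorphic section along a compact set need not extend to a global one.

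First I would invoke the base-change (stalk) formula for $\mathrm{R}(f\times\id)_!$, valid for an arbitrary sheaf on a locally compact space: for every $(n_o,t_o)\in N\times T$ and every $k$,
\[
\bigl(\mathrm{R}^k(f\times\id)_!\,\Cb_{M\times T/T}\bigr)_{(n_o,t_o)}\simeq \rH^k_\rc\bigl(f^{-1}(n_o)\times\{t_o\};\,\Cb_{M\times T/T}|_{f^{-1}(n_o)\times\{t_o\}}\bigr).
\]
It is therefore enough to prove that, for each such $(n_o,t_o)$, the restriction of $\Cb_{M\times T/T}$ to the fiber $F:=f^{-1}(n_o)\times\{t_o\}$ has vanishing compactly supported cohomology in positive degrees.

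The key step is to show that this restricted sheaf is c-soft on $F$. Writing $j\colon f^{-1}(n_o)\hookrightarrow M$ for the inclusion and $q\colon M\times T\to M$ for the projection, the point is that multiplication by a function $\chi(x)$ depending only on the $M$-variable preserves $T$-holomorphy (since $\ov\partial_T\chi=0$), so that $\Cb_{M\times T/T}$ is a module over $q^{-1}C^\infty_M$. Restricting to $F$ turns this into a module structure over $j^{-1}C^\infty_M$, which is a soft (indeed fine) sheaf of rings on the locally compact space $f^{-1}(n_o)$, because smooth partitions of unity on $M$ restrict to partitions of unity along the fiber. Since a sheaf of modules over a soft sheaf of rings is soft, $\Cb_{M\times T/T}|_F$ is c-soft, hence $\Gamma_\rc$-acyclic, giving $\rH^k_\rc(F;\Cb_{M\times T/T}|_F)=0$ for $k>0$. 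Combined with the stalk formula, this yields $\mathrm{R}^k(f\times\id)_!\,\Cb_{M\times T/T}=0$ for $k>0$, which is the desired $(f\times\id)_!$-acyclicity.

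The main obstacle is conceptual rather than computational: one must resist working on the total space $M\times T$, where softness genuinely fails, and instead exploit that the identity factor freezes the $T$-coordinate along each fiber of $f\times\id$. Once restricted to a fiber, the only cut-offs needed are in the $M$-direction, and these are automatically $T$-holomorphic; the remainder is the standard fact that modules over soft sheaves of rings are soft and that c-soft sheaves have no higher $\Gamma_\rc$-cohomology. The argument for $\Cb_{M\times T/T}$ is identical to the one for $\Db_{M\times T/T}$, multiplication by $\chi(x)$ being insensitive to the relative top-form coefficient.
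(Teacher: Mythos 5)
Your proof is correct and takes essentially the same route as the paper's: both reduce, via the stalk (base-change) formula for $(f\times\id)_!$, to the vanishing of compactly supported cohomology on the fibers $f^{-1}(y_o)\times\{t_o\}$, and both obtain c-softness there from the module structure over (the restriction of) $\shc^\infty_M$, exploiting that multiplication by functions of the $M$-variable alone preserves $T$-holomorphy. The only cosmetic difference is that the paper first restricts to $M\times\{t_o\}$, notes c-softness of that restriction as a $\shc^\infty_M$-module, and then passes to the fiber, whereas you restrict to the fiber directly.
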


\begin{proof}
For any $t_o\in T$, the sheaf-theoretic restriction $i_{t_o}^{-1}\Cb_{M\times T/T}$ is c-soft, being a $\shc^\infty_M$-module. It follows that for any $(y_o,t_o)\in N\times T$, denoting by $i:f^{-1}(y_o)\times\{t_o\}\hto M\times T$ the inclusion,
\[
i_{(y_o,t_o)}^{-1}\shh^j(f\times\id)_!\Cb_{M\times T/T}=H^j_\rc(f^{-1}(y_o)\times\{t_o\},i^{-1}\Cb_{M\times T/T})=0
\]
for any $j>0$.
\end{proof}

\backmatter
\bibliographystyle{amsplain}
\bibliography{relative-hermitian-duality}

\providecommand{\eprint}[1]{\href{http://arxiv.org/abs/#1}{\texttt{arXiv\string:\allowbreak#1}}}\providecommand{\hal}[1]{\href{https://hal.archives-ouvertes.fr/hal-#1}{\texttt{hal-#1}}}
\providecommand{\bysame}{\leavevmode\hbox to3em{\hrulefill}\thinspace}
\providecommand{\MR}{\relax\ifhmode\unskip\space\fi MR }
% \MRhref is called by the amsart/book/proc definition of \MR.
\providecommand{\MRhref}[2]{%
  \href{http://www.ams.org/mathscinet-getitem?mr=#1}{#2}
}
\providecommand{\href}[2]{#2}
\begin{thebibliography}{10}

\bibitem{A}
E.~Andronikof, \emph{The {K}ashiwara conjugation and wave-front sets of regular
  holonomic distributions on a complex manifold}, Invent. Math. \textbf{111}
  (1993), no.~1, 35--49, Erratum: \textit{Ibid.} \textbf{117} (1994), no.\,2,
  p.\,357.

\bibitem{BK86}
Daniel Barlet and M.~Kashiwara, \emph{{Le r{\'e}seau $L^2$ d'un système
  holonome r{\'e}gulier}}, Invent. Math. \textbf{86} (1986), 35--62.

\bibitem{Bjork93}
J.-E. Bj{\"o}rk, \emph{Analytic {$\mathcal{D}$}-modules and applications},
  Kluwer Academic Publisher, Dordrecht, 1993.

\bibitem{FMF18}
L.~Fiorot and T.~Monteiro~Fernandes, \emph{{$t$}-structures for relative
  {$\mathcal{D}$}-modules and {$t$}\nobreakdash-exactness of the de {R}ham
  functor}, J.~Algebra \textbf{509} (2018), 419--444.

\bibitem{FMFS19}
L.~Fiorot, T.~Monteiro~Fernandes, and C.~Sabbah, \emph{Relative regular
  {R}iemann-{H}ilbert correspondence}, Proc. London Math. Soc.~(3) \textbf{122}
  (2021), no.~3, 434--457, Erratum: \textit{Ibid.}, \textbf{123} (2021) no.\,6,
  p.\,649--654.

\bibitem{FMFS22}
\bysame, \emph{Relative regular {R}iemann-{H}ilbert correspondence {II}},
  \eprint{2203.05444}, 2022.

\bibitem{Hormander03}
L.~H{\"o}rmander, \emph{The analysis of linear partial differential operators.
  {I}}, Classics in Math., Springer-Verlag, Berlin, 2003.

\bibitem{Kashiwara78}
M.~Kashiwara, \emph{{On the holonomic systems of differential equations II}},
  Invent. Math. \textbf{49} (1978), 121--135.

\bibitem{Kashiwara86}
\bysame, \emph{Regular holonomic {$\mathcal D$}-modules and distributions on
  complex manifolds}, Complex analytic singularities, Adv. Stud. Pure Math.,
  vol.~8, North-Holland, Amsterdam, 1986, pp.~199--206.

\bibitem{Kashiwara03}
\bysame, \emph{{$D$}-modules and microlocal calculus}, Translations of
  Mathematical Monographs, vol. 217, American Mathematical Society, Providence,
  R.I., 2003.

\bibitem{Maisonobe16}
{\relax Ph}.~Maisonobe, \emph{Filtration relative, l’id{\'e}al de {B}ernstein
  et ses pentes}, 2016, \hal{01285562v2}.

\bibitem{Mochizuki07}
T.~Mochizuki, \emph{{Asymptotic behaviour of tame harmonic bundles and an
  application to pure twistor $D$-modules}}, Mem. Amer. Math. Soc., vol. 185,
  no.~869--870, American Mathematical Society, Providence, R.I., 2007.

\bibitem{Mochizuki10b}
\bysame, \emph{Stokes structure of a good meromorphic flat bundle}, J.~Inst.
  Math. Jussieu \textbf{10} (2011), no.~3, 675--712.

\bibitem{Mochizuki08}
\bysame, \emph{Wild harmonic bundles and wild pure twistor {$D$}-modules},
  Ast{\'e}risque, vol. 340, Soci{\'e}t{\'e} Math{\'e}matique de France, Paris,
  2011.

\bibitem{Mochizuki11}
\bysame, \emph{{Mixed twistor D-Modules}}, Lect. Notes in Math., vol. 2125,
  Springer, Heidelberg, New York, 2015.

\bibitem{MFCS1}
T.~Monteiro~Fernandes and C.~Sabbah, \emph{{On the de Rham complex of mixed
  twistor $\mathcal{D}$-modules}}, Internat. Math. Res. Notices (2013), no.~21,
  4961--4984.

\bibitem{MFCS2}
\bysame, \emph{{Riemann-Hilbert correspondence for mixed twistor
  $\mathcal{D}$-modules}}, J.~Inst. Math. Jussieu \textbf{18} (2019), no.~3,
  629--672.

\bibitem{Sabbah00}
C.~Sabbah, \emph{{{\'E}quations diff{\'e}rentielles {\`a} points singuliers
  irr{\'e}guliers et ph{\'e}nomène de Stokes en dimension~{$2$}}},
  Ast{\'e}risque, vol. 263, Soci{\'e}t{\'e} Math{\'e}matique de France, Paris,
  2000.

\bibitem{Sabbah05}
\bysame, \emph{{Polarizable twistor $\mathcal{D}$-modules}}, Ast{\'e}risque,
  vol. 300, Soci{\'e}t{\'e} Math{\'e}matique de France, Paris, 2005.

\bibitem{Sabbah13}
\bysame, \emph{Introduction to {S}tokes structures}, Lect. Notes in Math., vol.
  2060, Springer-Verlag, 2013.

\bibitem{SS1}
P.~Schapira and J.-P. Schneiders, \emph{Index theorem for elliptic pairs},
  Ast{\'e}risque, vol. 224, Soci{\'e}t{\'e} Math{\'e}matique de France, Paris,
  1994.

\end{thebibliography}
\end{document}